\documentclass[12pt]{amsart}
\usepackage{amssymb,amscd}
\usepackage[cp1251]{inputenc}

\usepackage{amsmath}
\usepackage{amsthm}
\usepackage{mathtext}
\usepackage{euscript}
\usepackage{graphics}

\usepackage[dvips]{graphicx}
\usepackage[all]{xy}

\input xypic

\hoffset=-26mm \frenchspacing \emergencystretch=5pt \tolerance=400
\unitlength=1mm \textwidth=17cm

%\unitlength=1mm
%%
%\usepackage[T1]{fontenc}
%\usepackage[latin1]{inputenc}
\linethickness{0.5pt}
%\frenchspacing

%\emergencystretch=5pt
%\tolerance=600

\usepackage[english]{babel}
%%%
%\makeatletter

%%%%%%%%%%%%%%%%%%%%%%%%%%%%%% LyX specific LaTeX commands.
%% Because html converters don't know tabularnewline
%\providecommand{\tabularnewline}{\\}

%\makeatother

%\usepackage{babel}

\theoremstyle{plain}

\newtheorem{theo}{Theorem}[section]
\newtheorem{prop}[theo]{Proposition}

\newtheorem{constr}[theo]{Construction}

\theoremstyle{definition}
\newtheorem*{defi}{Definition}
\newtheorem{exam}[theo]{Example}
\newtheorem{quest}[theo]{Question}

\theoremstyle{remark}
\newtheorem*{rema}{Remark}

\numberwithin{equation}{section}

% bold italic letters

\DeclareMathOperator{\Tor}{Tor}

\DeclareMathOperator{\rk}{rk}
\DeclareMathOperator{\link}{link}

\newcommand{\zp}{\mathcal Z_P}
\newcommand{\zk}{\mathcal Z_K}
\newcommand{\ko}{\Bbbk}
\DeclareMathOperator{\vc}{\mbox{\textit{vc}}}

\begin{document}

\title[Families of Golod complexes]{Families of minimally non-Golod complexes and their polyhedral products}
%\date{\today}

\author{Ivan Limonchenko}

\thanks{The work was supported by
RSF grant no. 14-11-00414.\\
%\copyright\,  I.Yu. Limonchenko
}

\subjclass[2010]{Primary 13F55, 55U10, Secondary 52B11}

\address{Department of Geometry and Topology,
Faculty of Mathematics and Mechanics, Moscow State University,
Leninskiye Gory, Moscow 119992, Russia}
\email{ilimonchenko@gmail.com}

\keywords{moment-angle complex, Stanley--Reisner ring, Golod ring, minimally non-Golod ring, minimal triangulation}

\begin{abstract}

We consider families of simple polytopes $P$ and simplicial complexes $K$ well-known in polytope theory and convex geometry, and show that their moment-angle complexes have some remarkable homotopy properties which depend on combinatorics of the underlying complexes and algebraic properties of their Stanley--Reisner rings. We introduce infinite series of Golod and minimally non-Golod simplicial complexes $K$ with moment-angle complexes $\zk$ having free integral cohomology but not homotopy equivalent to a wedge of spheres or a connected sum of products of spheres respectively. We then prove a criterion for a simplicial multiwedge and composition of complexes to be Golod and minimally non-Golod and present a class of minimally non-Golod polytopal spheres.

\end{abstract}

%\dedicatory{Dedicated to Professor Victor M. Buchstaber on the occasion of his %70th birthday}

\maketitle

\section{Introduction}
We denote by $K$ a simplicial complex of dimension $n-1$ on $m$ vertices and by $\ko$ a field or the ring of integers. Let $\ko[v_{1},\ldots,v_{m}]$ be the graded polynomial algebra on $m$ variables, $\deg(v_{i})=2$. The \emph{face ring} (or the
\emph{Stanley--Reisner ring}) of $K$ over $\ko$ is the quotient ring
$$
   \ko[K]=\ko[v_{1},\ldots,v_{m}]/\mathcal I_K
$$
where $\mathcal I_K$ is the ideal generated by those square free
monomials $v_{i_{1}}\cdots{v_{i_{k}}}$ for which
$\{i_{1},\ldots,i_{k}\}$ is not a simplex in $K$. We refer to
$\mathcal I_{K}$ as the \emph{Stanley--Reisner ideal} of~$K$.
Note that $\ko[K]$ is a $\ko$-algebra and a module over $\ko[v_{1},\ldots,{v_{m}}]$ via
the quotient projection. 

In what follows we denote by $P$ a simple $n$-dimensional convex polytope with $m$ {\textit{facets}} (i.e faces of codimension 1) $F_{1},\ldots,F_{m}$. Denote by $K_P$ the boundary $\partial P^*$ of the dual simplicial polytope. It can be viewed as a $(n-1)$-dimensional simplicial complex on the set $[m]$, whose simplices are subsets
$\{i_1,\ldots,i_k\}$ such that $F_{i_1}\cap\ldots\cap
F_{i_k}\ne\varnothing$ in~$P$.

Suppose $({\bf{X}},{\bf{A}})=\{(X_i,A_i)\}_{i=1}^{m}$ is a set of topological pairs.
A {\textit{polyhedral product}} is a topological space:
$$
({\bf{X}},{\bf{A}})^K=\bigcup\limits_{I\in K}({\bf{X}},{\bf{A}})^I,
$$
where $({\bf{X}},{\bf{A}})^I=\prod\limits_{i=1}^{m} Y_{i}$ for $Y_{i}=X_{i}$, if $i\in I$, and $Y_{i}=A_{i}$, if $i\notin I$.
Particular cases of a polyhedral product $({\bf{X}},{\bf{A}})^K$ are {\textit{moment-angle complexes}} $\zk=(\mathbb{D}^2,\mathbb{S}^1)^K$ and {\textit{real moment-angle complexes}} $\mathcal R_K=(\mathbb{D}^1,\mathbb{S}^0)^K$. We also call $\zp=\mathcal Z_{K_P}$ the {\textit{moment-angle manifold}} of $P$. By \cite[Corollary 6.2.5]{TT}, $\zp$ has a structure of a smooth manifold of
dimension $m+n$. 

The $\Tor$-groups of $\ko[K]$ acquire a topological interpretation by means of the following result on the cohomology of $\mathcal Z_K$.

\begin{theo}[{\cite[Theorem 4.5.4]{TT} or \cite[Theorem 4.7]{P}}]\label{zkcoh}
The cohomology algebra of the moment-angle complex $\mathcal Z_K$ is given
by the isomorphisms
\[
\begin{aligned}
  H^{*,*}(\mathcal Z_K;\ko)&\cong\Tor_{\ko[v_1,\ldots,v_m]}^{*,*}(\ko[K],\ko)\\
  &\cong H\bigl[\Lambda[u_1,\ldots,u_m]\otimes \ko[K],d\bigr]\\
  &\cong \bigoplus\limits_{I\subset [m]}\widetilde{H}^{*}(K_{I}),
\end{aligned}
\]
where bigrading and differential in the cohomology of the differential
bigraded algebra are defined by
\[
  \mathop{\mathrm{bideg}} u_i=(-1,2),\;\mathop{\mathrm{bideg}} v_i=(0,2);\quad
  du_i=v_i,\;dv_i=0.
\]
In the third row, $\widetilde{H}^*(K_{I})$ denotes the reduced simplicial cohomology of the {\textit{full subcomplex}} $K_{I}$ of $K$ (the restriction of $K$ to $I\subset [m]$). The last isomorphism is the sum of isomorphisms 
$$H^p(\mathcal Z_K)\cong\sum\limits_{I\subset [m]}\widetilde{H}^{p-|I|-1}(K_{I}),$$
and the ring structure is given by the maps
%\begin{equation}\label{star}
$$
\widetilde{H}^{p-|I|-1}(K_{I})\otimes\widetilde{H}^{q-|J|-1}(K_{J})\to \widetilde{H}^{p+q-|I|-|J|-1}(K_{I\cup J}),\eqno (*)
$$
%\end{equation}
which are induced by the canonical simplicial maps $K_{I\cup J}\hookrightarrow K_{I}*K_{J}$ (join of simplicial complexes) for $I\cap J=\varnothing$ and zero otherwise.
\end{theo}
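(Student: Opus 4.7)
My plan is to establish the chain of three isomorphisms in sequence and then analyse the multiplicative structure. The starting point is a topological interpretation: $\mathcal Z_K$ is the homotopy fibre of the canonical inclusion of the Davis--Januszkiewicz space $DJ(K) := (\C P^{\infty}, \mathrm{pt})^K$ into $(\C P^{\infty})^m = BT^m$, so that there is a fibration $\mathcal Z_K \to DJ(K) \to BT^m$. A preliminary computation, carried out by induction on the number of missing faces of $K$ with the help of Mayer--Vietoris, shows that $H^*(DJ(K); \ko) \cong \ko[K]$. Since $H^*(BT^m; \ko) = \ko[v_1, \ldots, v_m]$ is polynomial, the Eilenberg--Moore spectral sequence of this fibration collapses and yields the first isomorphism
$$ H^{*,*}(\mathcal Z_K; \ko) \cong \Tor_{\ko[v_1, \ldots, v_m]}^{*,*}(\ko[K], \ko). $$

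The second isomorphism is purely algebraic. The Koszul complex $\Lambda[u_1, \ldots, u_m] \otimes \ko[v_1, \ldots, v_m]$ with $du_i = v_i$ and the prescribed bidegrees is a free bigraded resolution of $\ko$ over $\ko[v_1, \ldots, v_m]$. Tensoring with $\ko[K]$ produces the differential bigraded algebra $\Lambda[u_1, \ldots, u_m] \otimes \ko[K]$ whose cohomology computes the desired Tor.

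For the third isomorphism I would use the finer $\Z^m$-multigrading on $\Lambda[u_1, \ldots, u_m] \otimes \ko[K]$ in which both $u_i$ and $v_i$ carry weight $e_i$. Summands of non-squarefree multidegree are acyclic (they decompose as tensor products containing an acyclic Koszul factor), so only multidegrees indexed by subsets $I \subseteq [m]$ contribute. The multidegree-$I$ piece, spanned by the monomials $u_J v_{I \setminus J}$ with $J \subseteq I$ and $I \setminus J \in K$, is naturally identified with the reduced simplicial cochain complex $\widetilde C^*(K_I; \ko)$ of the full subcomplex $K_I$, after a shift in homological degree by $|I| + 1$. Summing over $I$ produces the desired isomorphism $H^p(\mathcal Z_K) \cong \bigoplus_I \widetilde H^{p - |I| - 1}(K_I)$.

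The ring structure is the most delicate part and will be the main obstacle. Multiplication on $\Lambda[u_1, \ldots, u_m] \otimes \ko[K]$ is additive on multidegrees, so the product of the $I$-summand and the $J$-summand lands in multidegree $I + J$; when $I \cap J \neq \varnothing$ this lies outside squarefree multidegrees and is therefore acyclic, which accounts for the vanishing clause in $(*)$. When $I \cap J = \varnothing$ one must match the induced pairing with the classical simplicial cochain product on the join $K_I * K_J$, followed by restriction along the canonical inclusion $K_{I \cup J} \hookrightarrow K_I * K_J$. Making this last identification precise at the chain level, and checking its compatibility with the multidegree decomposition and the Eilenberg--Moore isomorphism, is where the bulk of the technical work lies.
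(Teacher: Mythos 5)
The paper cites this theorem from [TT, Theorem 4.5.4] and [P, Theorem 4.7] without proving it, so the appropriate comparison is with those sources. Your additive argument is correct and standard: $H^*(DJ(K);\ko)\cong\ko[K]$, the Koszul resolution computes $\Tor$, and the $\Z^m$-multigrading reduces the Koszul complex of $\ko[K]$ to a direct sum of shifted reduced cochain complexes of full subcomplexes $K_I$ over squarefree multidegrees, with the non-squarefree pieces acyclic; the vanishing of products in $(*)$ for $I\cap J\neq\varnothing$, for the same squarefree reason, is also the standard argument.

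The gap is in the first step, and it is precisely the part you defer as technical. The Eilenberg--Moore spectral sequence is multiplicative and collapses at $E_2$, but collapse only identifies $\Tor_{\ko[v_1,\ldots,v_m]}(\ko[K],\ko)$ with the associated graded algebra of $H^*(\mathcal Z_K)$ for the EMSS filtration, not with $H^*(\mathcal Z_K)$ itself as a ring; one must still resolve the multiplicative extension problems. Historically this is exactly where the EMSS route stalled: it gives the additive isomorphism readily, but the ring isomorphism and the simplicial description $(*)$ of the product were established later by a different method. The proof in [TT, Section 4.5] sidesteps the issue by building an explicit cellular cochain DG-algebra model for $\mathcal Z_K$ (cells indexed by pairs $(I,J)$ with $I\in K$ and $I\cap J=\varnothing$) and exhibiting a quasi-isomorphism of DG-algebras onto $\Lambda[u_1,\ldots,u_m]\otimes\ko[K]$; the ring structure, including the identification of $(*)$ with restriction along $K_{I\cup J}\hookrightarrow K_I * K_J$, then drops out with no spectral sequence at all. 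Your outline would be complete if you either resolved the EMSS extensions explicitly or, more in the spirit of the cited reference, replaced the first step by the cellular cochain model.
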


Additively the following theorem due to Hochster holds.

\begin{theo}[{\cite{Hoch}}]\label{hoch}
For any simplicial complex $K$ on $m$ vertices we have:
$$
\Tor^{-i,2j}_{\ko[v_{1},\ldots,v_{m}]}(\ko[K],\ko)\cong\bigoplus\limits_{J\subset [m],\,|J|=j}\widetilde{H}^{j-i-1}(K_{J}).
$$
\end{theo}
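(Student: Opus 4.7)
The plan is to deduce Theorem \ref{hoch} as a bigraded refinement of the second and third isomorphisms in Theorem \ref{zkcoh}. The second isomorphism there identifies $\Tor^{*,*}_{\ko[v_{1},\ldots,v_{m}]}(\ko[K],\ko)$ with the cohomology of the Koszul dga $\bigl(\Lambda[u_1,\ldots,u_m]\otimes\ko[K],\,d\bigr)$, since $\Lambda[u_1,\ldots,u_m]\otimes\ko[v_1,\ldots,v_m]$ with $du_i=v_i$ is a free bigraded resolution of $\ko$ over $\ko[v_1,\ldots,v_m]$. The task therefore reduces to computing this cohomology bidegree by bidegree.

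To do so, I would refine the bigrading to the finer $\mathbb{Z}^{m}$-multigrading in which $u_\ell$ and $v_\ell$ both live in multidegree $e_\ell$. In the square-free multidegree $e_J$, a basis of the multigraded piece is given by the monomials $u_{J\setminus\sigma}\,v_\sigma$ with $\sigma\in K_J$, and the Koszul differential sends $u_{J\setminus\sigma}\,v_\sigma$ to the signed sum over those $i\in J\setminus\sigma$ with $\sigma\cup\{i\}\in K$. After fixing an ordering of $[m]$ and a uniform degree shift, this subcomplex is isomorphic to the augmented simplicial cochain complex of $K_J$. Since $u_{J\setminus\sigma}\,v_\sigma$ sits in bidegree $\bigl(-(|J|-|\sigma|),\,2|J|\bigr)$, the $e_J$-contribution to bidegree $(-i,2j)$ with $|J|=j$ is exactly $\widetilde{H}^{j-i-1}(K_J)$.

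Summing over all $J\subset[m]$ with $|J|=j$ then produces the stated formula, \emph{provided} the non-square-free multidegrees contribute nothing. This is the main technical step; I would dispose of it by the standard Koszul argument: whenever some $\alpha_\ell\ge 2$, the multigraded piece carries an explicit contracting homotopy built from the pair $(u_\ell,v_\ell)$ and hence is acyclic. The most delicate point of the argument is the sign-consistent identification of the square-free piece with $\widetilde{C}^{*}(K_J)$; once vertex orderings are fixed it is a direct check, but it is where the combinatorics of $K_J$ actually enters.
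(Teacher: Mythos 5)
The paper does not prove Theorem~\ref{hoch}: it is stated as a cited classical result of Hochster, and in fact it is logically a shadow of the third isomorphism already quoted in Theorem~\ref{zkcoh} (which the paper attributes to \cite{TT} and \cite{P}). So there is no ``paper's own proof'' to compare against; what you have produced is a self-contained proof of the cited fact, and it is the standard one, essentially the argument given in \cite[Ch.~3]{TT}.

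Your argument is correct. The three ingredients all check out: (i) the Koszul dga $\bigl(\Lambda[u_1,\ldots,u_m]\otimes\ko[K],\,du_i=v_i\bigr)$ does compute $\Tor_{\ko[v_1,\ldots,v_m]}(\ko[K],\ko)$; (ii) the acyclicity of a non-square-free multidegree $\alpha$ with $\alpha_\ell\ge 2$ via the contraction $s(u_Sv^\beta)=\pm u_{S\cup\{\ell\}}v^{\beta-e_\ell}$ is sound --- the one point worth making explicit is that $v^{\beta-e_\ell}$ is still nonzero in $\ko[K]$, which holds because when $\ell\notin S$ one has $\beta_\ell=\alpha_\ell\ge 2$, so $\mathrm{supp}(\beta-e_\ell)=\mathrm{supp}(\beta)$; and (iii) the identification of the square-free piece in multidegree $e_J$ with the augmented simplicial cochain complex $\widetilde{C}^{*}(K_J)$, with the bidegree bookkeeping $|\sigma|=j-i$ giving cohomological degree $j-i-1$, is exactly right (the element $u_J=u_{J\setminus\varnothing}v_\varnothing$ plays the role of the empty simplex / $(-1)$-cochain). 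You are right that the sign-consistency is the only genuinely fiddly step, and fixing an ordering of $[m]$ resolves it. In short: correct proof, standard route, no gap.
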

The ranks of the bigraded components of the Tor-algebra 
$$
\beta^{-i,2j}(\ko[K])=\rk_{\ko}\Tor^{-i,2j}_{\ko[v_{1},\ldots,v_{m}]}(\ko[K],\ko)
$$ 
are called the {\textit{bigraded Betti numbers}} of $\ko[K]$ or $K$, when the base field $\ko$ is fixed.

A face ring $\ko[K]$ is called {\textit{Golod}}
if the multiplication and all higher Massey operations
in $\Tor_{\ko[v_{1},\ldots,{v_{m}}]}\bigl(\ko[K],\ko\bigr)$
are trivial. This property was first considered by Golod~\cite{G} in his study of local rings with rational Poincar\'e series, see also Gulliksen and Levin~\cite{G-L}. Due to the result of Berglund and J\"ollenbeck~\cite[Theorem 5.1]{B-J} $\ko[K]$ is Golod when the product in the Tor-algebra is trivial over a field $\ko$. We say that $K$ is a Golod complex when $\ko[K]$ is a Golod ring over any $\ko$.

If $K$ itself is not Golod but deleting any vertex $v$ from $K$
turns the restricted complex $K-v$ into a Golod one, then $\ko[K]$ and $K$ itself are called {\textit{minimally non-Golod}}.  

We need the next result due to Bahri, Bendersky, Cohen and Gitler which is true in a much more general situation of polyhedral products.

\begin{theo}[{\cite{BBCG}}]~\label{BBCGdecomp}
For any moment-angle complex its suspension $\Sigma\mathcal Z_K$ is homotopy equivalent to the wedge of suspensions over all non-simplex induced subcomplexes: $\bigvee_{J\notin K}\Sigma^{2+|J|}|K_J|.$
\end{theo}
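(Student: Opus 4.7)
The plan is to derive this statement as a special case of the general Bahri--Bendersky--Cohen--Gitler stable decomposition of polyhedral products, and then specialize to the pair $(D^2, S^1)$. In the based homotopy category, the target general splitting has the form
\[
\Sigma (\mathbf{X}, \mathbf{A})^K \simeq \bigvee_{I \subseteq [m]} \Sigma\, \widehat{(\mathbf{X}, \mathbf{A})}^{K_I},
\]
where the smash polyhedral product $\widehat{(\mathbf{X}, \mathbf{A})}^{K_I}$ is built from the pair restricted to the vertex set $I$ by replacing each Cartesian product with a smash product.

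To establish this abstract splitting, I would first present $(\mathbf{X}, \mathbf{A})^K$ as the homotopy colimit over the face poset of $K$ of the cubical diagram $\sigma \mapsto (\mathbf{X}, \mathbf{A})^\sigma$, which is legitimate because each $A_i \hookrightarrow X_i$ is a pointed cofibration. After one suspension, such a homotopy colimit splits stably as a wedge indexed by the cells of the diagram, in the spirit of the iterated James--Milnor identity $\Sigma(X \times Y) \simeq \Sigma X \vee \Sigma Y \vee \Sigma(X \wedge Y)$. An alternative inductive approach is to add one vertex at a time: this exhibits $(\mathbf{X}, \mathbf{A})^K$ as a homotopy pushout, and after suspension the pushout becomes a cofibration that splits off the new summand.

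Granted the general splitting, I specialize to $(X_i, A_i) = (D^2, S^1)$. For fixed $I \subseteq [m]$, each simplex $\sigma \in K_I$ contributes a smash $\bigwedge_{i \in I} Y_i^\sigma$ with $Y_i^\sigma = D^2$ for $i \in \sigma$ and $Y_i^\sigma = S^1$ otherwise; since $D^2$ is contractible, this smash is contractible whenever $\sigma \neq \emptyset$. A direct CW inspection, viewing the smash polyhedral product as the base $(S^1)^{\wedge |I|} = S^{|I|}$ with a cone attached for each vertex of $K_I$ and higher cones for higher simplices, identifies
\[
\widehat{(\mathbf{X}, \mathbf{A})}^{K_I} \simeq \Sigma |K_I| \wedge S^{|I|} = \Sigma^{|I|+1}|K_I|.
\]
Adding the outer suspension yields $\Sigma^{|I|+2}|K_I|$. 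If $I \in K$ then $K_I = 2^I$ is a full simplex and hence contractible, so that summand vanishes, leaving exactly the stated wedge over non-faces $J \notin K$.

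The main obstacle is proving the abstract BBCG splitting in full generality, where one must carefully track the gluing data across the face poset of $K$; once that splitting and a suitable cell model for the smash polyhedral product are in hand, the reduction to the explicit wedge formula for the pair $(D^2, S^1)$ is essentially formal.
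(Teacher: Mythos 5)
The paper does not prove this theorem --- it is cited directly from \cite{BBCG} --- so there is no internal argument to compare against; your reconstruction follows exactly the original Bahri--Bendersky--Cohen--Gitler strategy. The general stable splitting of $\Sigma(\mathbf{X},\mathbf{A})^K$ into smash polyhedral products $\widehat{(\mathbf{X},\mathbf{A})}^{K_I}$ indexed by $I\subseteq[m]$ is indeed the right lemma, and the observation that summands with $I\in K$ drop out because $K_I$ is a full simplex (hence contractible) is correct.

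The one step you treat too lightly is the phrase ``a direct CW inspection'' yielding $\widehat{(D^2,S^1)}^{K_I}\simeq\Sigma^{|I|+1}|K_I|$. This identification is the real content of the theorem once the abstract splitting is granted, and it is not a mere cell count: attaching cones of various dimensions over the face poset of $K_I$ to the base $S^{|I|}$ does not obviously collapse to a single iterated suspension of $|K_I|$, especially since for simplices $\sigma$ with $|\sigma|\ge 2$ the attached pieces are themselves contractible and the resulting gluing data is not transparently a suspension. The mechanism BBCG actually establish is that for pairs of the form $(CA_i, A_i)$ the smash polyhedral product is homotopy equivalent to the join $|K_I| * \bigwedge_{i\in I}A_i$; specializing to $A_i=S^1$ gives $|K_I| * S^{|I|}$, and only then does the elementary identity $X*S^n\simeq\Sigma^{n+1}X$ produce $\Sigma^{|I|+1}|K_I|$, and with the outer suspension, $\Sigma^{|I|+2}|K_I|$. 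Without that intermediate join identification, your argument has a genuine gap at the crucial step, even though the conclusion and the overall plan are correct.
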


The structure of this paper is as follows. We introduce several triangulations $K$ of classical manifolds with nice combinatorial properties for which we compute the homotopy types of $\zk$ and work out the Golod and minimally non-Golod properties of the complexes in Section 2. In particular, we prove Theorem~\ref{ComplexProjGolod} and Proposition~\ref{homotypeK(J)} which give infinite series of moment-angle complexes $\zk$ with free integral cohomology and $K$ being Golod or minimally non-Golod over any field, such that it is not homotopy equivalent to a wedge of spheres or a connected sum of products of spheres respectively (cf. Theorem~\ref{sKminnonGolod}).
In section 3 we prove a criterion when the simplicial multiwedge and composition of complexes are Golod and minimally non-Golod complexes (Theorem~\ref{SMWGolod} and Theorem~\ref{SubstGolod}). We use simplicial multiwedge construction to prove a criterion for minimal non-Golodness of the nerve complexes $K_P$ for simple polytopes $P$ with few facets (Theorem~\ref{FewVertices}). 

The author is grateful to Taras Panov for
many helpful discussions and advice. 
Many thanks to Anton Ayzenberg, Nickolay Erokhovets, Jelena Grbi\'c, Shizuo Kaji and Stephen Theriault for their comments and suggestions. The author would also like to thank the Institute of Mathematical Sciences and the organizers of the program on Combinatorial and Toric Homotopy in Singapore for providing excellent research conditions during the work on this paper.

\section{Some Golod complexes and their moment-angle complexes}

In this section we consider several well-known minimal triangulations of classical surfaces, as simplicial complexes $K$ for which we discuss the homotopy types and cohomology of $\zk$. We denote by $X^{\vee k}$ the $k$-fold wedge of $X$.

\begin{exam}~\label{ProjGolod}
Suppose $K$ is a 6-vertex minimal triangulation of $\mathbb{R}P^2$. 
%%%%%%%%%%%%%%%%%%%%     picture
%\begin{figure}[h]
%\psset{unit=.4cm}
%\begin{center}
%\includegraphics[scale=0.3]{}
%\end{center}
%\end{figure}

Due to the result of Grbic, Panov, Theriault and Wu~\cite[Example 3.3]{G-P-T-W} $K$ is Golod and $\mathcal Z_K$ has a homotopy type of a wedge: 
$$
\zk\simeq (S^{5})^{\vee 10}\vee (S^{6})^{\vee 15}\vee (S^{7})^{\vee 6}\vee\Sigma^{7}\mathbb{R}P^{2}.
$$  
\end{exam}

%\begin{exam}
%For a 2-dimensional torus $\mathbb{T}^2$ the following triangulation on 7 %vertices is well-known as a minimal one:

%%%%%%%%%%%%%%%%%%%%%%%%%%%% picture
%\begin{center}
%\includegraphics[scale=0.6]{}\\
%\end{center}

%\end{exam}

\begin{prop}~\label{TorusGolod}
Suppose $K$ is a 7-vertex minimal triangulation of $\mathbb{T}^2$. Then $K$ is Golod and $\mathcal Z_K$ has a homotopy type of a wedge of spheres:
$$
\zk\simeq (S^{5})^{\vee 21}\vee (S^{6})^{\vee 49}\vee (S^{7})^{\vee 42}\vee (S^{8})^{\vee 14}\vee (S^{9})^{\vee 2}\vee S^{10}.
$$
\end{prop}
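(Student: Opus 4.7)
Plan. I would combine Theorems~\ref{zkcoh}, \ref{hoch}, and \ref{BBCGdecomp}; the argument splits into Golodness (an algebraic check) and identification of the unsuspended homotopy type (the geometric step). Up to isomorphism the 7-vertex minimal triangulation $K$ of $\mathbb{T}^2$ is unique; its 1-skeleton is the complete graph on 7 vertices, it has 14 triangles, and so there are $\binom{7}{3}-14=21$ missing 2-faces. Because $K$ is 2-dimensional, $\widetilde{H}^k(K_J)=0$ for $k\ge 3$; the complete 1-skeleton makes every $K_J$ connected for $|J|\ge 2$, so $\widetilde{H}^0(K_J)=0$; and since $\mathbb{T}^2$ is a closed pseudomanifold, any 2-cycle has full support, giving $\widetilde{H}^2(K_J)=0$ for every $J\subsetneq[7]$. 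In particular every $\widetilde{H}^*(K_J;\Z)$ is free. By Berglund--J\"ollenbeck, Golodness over a field follows from triviality of the cup products in Theorem~\ref{zkcoh}; but the vanishing of $\widetilde{H}^0(K_L)$ forces both factors of such a product to sit in degree $\ge 1$, so the product lands in degree $\ge 3$, while $K_{I\cup J}$ is at most 2-dimensional and the target vanishes. This proves $K$ is Golod over every field, and the torsion-freeness upgrades this to Golodness over $\Z$.

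Next, the Betti count. By Theorem~\ref{hoch} each generator of $\widetilde{H}^{j-i-1}(K_J)$ with $|J|=j$ yields a sphere $S^{2j-i}$. For $J\subsetneq[7]$ only $\widetilde{H}^1$ is nonzero, and the complete 1-skeleton gives the Euler-characteristic relation
\[
  \widetilde{b}_1(K_J)=\binom{|J|-1}{2}-t_J,
\]
where $t_J$ counts the triangles of $K$ contained in $J$. Summing via $\sum_{|J|=j}t_J=14\binom{4}{j-3}$ yields the totals $21, 49, 42, 14$ for $j=3, 4, 5, 6$, matching the claimed counts of $S^5,\ldots,S^8$. For $J=[7]$ one has $\widetilde{H}^1(\mathbb{T}^2)=\ko^2$ and $\widetilde{H}^2(\mathbb{T}^2)=\ko$, contributing the remaining $2$ copies of $S^9$ and $1$ copy of $S^{10}$.

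Finally the homotopy type. Theorem~\ref{BBCGdecomp} gives $\Sigma\zk\simeq\bigvee_{J\notin K}\Sigma^{2+|J|}|K_J|$. For $|J|<7$ the summand $\Sigma|K_J|$ is simply connected with free homology concentrated in degree 1, hence a wedge of 2-spheres; and $\Sigma\mathbb{T}^2\simeq S^2\vee S^2\vee S^3$. Thus $\Sigma\zk$ is already a wedge of spheres with the desired counts. To descend to $\zk$ itself I would construct a map $\bigvee_i S^{n_i}\to\zk$ realizing a basis of $H^*(\zk;\Z)$ --- the summands coming from the inclusions of full subcomplexes into $\zk$, combined with explicit representatives of the classes of $\widetilde{H}^*(\mathbb{T}^2)$ --- and invoke the Whitehead theorem. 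The main obstacle is precisely this desuspension: while BBCG delivers $\Sigma\zk$ as a wedge of spheres for free, lifting the splitting to $\zk$ requires controlling all attaching maps, and this is exactly where the present case diverges from Example~\ref{ProjGolod}, in which the 2-torsion in $H_*(\mathbb{R}P^2)$ obstructs the summand $\Sigma^7\mathbb{R}P^2$ from splitting off as spheres even after suspension.
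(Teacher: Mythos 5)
Your argument runs parallel to the paper's, with one genuine substitution and one genuine gap.

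\textbf{Where you diverge constructively.} For the bigraded Betti numbers you replace the paper's Macaulay2 computation by the combinatorial Euler-characteristic count $\widetilde{b}_1(K_J)=\binom{|J|-1}{2}-t_J$ and the double-counting identity $\sum_{|J|=j}t_J=14\binom{4}{j-3}$; the numbers $21,49,42,14$ do check out. This is a more self-contained and arguably more illuminating way to get Table~\ref{bBnT2}, and your Golodness argument (both tensor factors in degree $\ge1$ by 2-neighbourliness, target in degree $\ge3$ killed by $\dim K=2$) is exactly the paper's.

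\textbf{Where there is a gap.} First, a minor one: the pseudomanifold argument gives $\widetilde{H}_2(K_J;\Z)=0$ for $J\subsetneq[7]$, not $\widetilde{H}^2(K_J;\Z)=0$; by universal coefficients $\widetilde{H}^2(K_J;\Z)\cong\operatorname{Ext}(\widetilde{H}_1(K_J;\Z),\Z)$, so you still need to rule out torsion in $\widetilde{H}_1(K_J)$. This is true here, and is what the paper's integral Macaulay2 computation certifies, but it does not follow from the pseudomanifold property alone; without it, ``$\Sigma|K_J|$ is a wedge of 2-spheres'' is unjustified. Second, and more seriously, the desuspension step is left as a plan. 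Your sketch (build a map from a wedge of spheres realizing a cohomology basis, then invoke Whitehead) is plausible in outline, but you do not identify why the requisite splitting maps exist. The paper's actual argument is a stable-range observation: $\zk$ has cells only in dimensions $5$ through $10$, so the attaching maps of $k$-cells for $6\le k\le 10$ land in the stable range for wedges of spheres of dimension $\ge5$, hence stably-null (which Theorem~\ref{BBCGdecomp} supplies) implies null. You flag this as ``the main obstacle'' but never discharge it, so as written the proposal establishes the stable splitting and the Betti numbers but not the unstable homotopy equivalence claimed in the proposition.

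Your concluding contrast with $\mathbb{R}P^2_6$ is correct and matches the paper's Example~\ref{ProjGolod}.
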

\begin{proof}
Let us prove Golodness of $K$. By Theorem~\ref{zkcoh}, the map 
$$
\widetilde{H}^{i}(K_{I})\otimes\widetilde{H}^{j}(K_{J})\to\widetilde{H}^{i+j+1}(K_{I\cup J})%,\eqno (*)
$$
is trivial. Indeed, if $i=j=1$ then $H^{i+j+1}(K_{I\cup J})=0$ as $K$ has dimension $n-1=2$; if $i=0$ then $\widetilde{H}^{i}(K_I)=0$ as $K$ is 2-neighbourly.

Using Macaulay2 software~\cite{Mac}, we compute the bigraded Betti numbers of $K$ (over $\mathbb{Z}$). The tables of $\beta^{-i,2j}(K)$ in what follows have $n$ rows and $m$ columns. The number in the $k$s row and $l$s column equals
$\beta^{-l,2(l+k)}(K)$, where $ 1\leq {k}\leq {n}$ and $2\leq {l+k}\leq {m}$. Other bigraded Betti numbers are {\bf zero}, except for $\beta^{0,0}(K)=1$, see~\cite[Corollary 4.6.7]{TT}. The table below has $m=7$ columns and $n=3$ rows.

\begin{table}[h]
\begin{center}
{\small
\begin{tabular}{|c|c|c|c|c|c|c|}
\hline
0 & 0 & 0 & 0 & 0 & 0 & 0
\tabularnewline
\hline
21 & 49 & 42 & 14 & 2 & 0 & 0
\tabularnewline
\hline
0& 0 & 0 & 1 & 0 & 0 & 0
\tabularnewline
\hline
\end{tabular}
}
\end{center}
\caption{Bigraded Betti numbers of $\mathbb{T}^2_7$.}
\label{bBnT2}
\end{table}
To find the homotopy type of $\zk$ one can see that the stable homotopy decomposition in Theorem~\ref{BBCGdecomp} can be desuspended, since all the attaching maps of $k$-cells in the CW-complex $\zk$ in dimensions $6\leq k\leq 10$ are in the stable range. Then they are all null homotopic; the desuspension
in Theorem~\ref{BBCGdecomp} gives the homotopy type as in the statement required due to Theorem~\ref{hoch}, see Table~\ref{bBnT2}.
\end{proof}

The two above examples as well as all previously computed ones make it possible to ask the following question: is it true that if $K$ is a Golod complex and all its induced subcomplexes have free integral homology groups then $\mathcal Z_K$ is homotopy equivalent to a wedge of spheres?

The answer is negative as the next result shows.

\begin{exam}
Let $K$ be the minimal 9-vertex triangulation of $\mathbb{C}P^2$ which was described by K\"{u}hnel and Banchoff~\cite{K-B}. K\"{u}hnel and Lassmann~\cite{K-L} computed the symmetry group and proved combinatorial uniqueness and 3-neighbourness of $K$. 

Here $K$ is on the vertex set $\{0,\ldots,8\}$, and there are 36 maximal 4-dimensional faces which are given in the following table, see~\cite[p. 178]{K-L}:

\begin{table}[h]
\begin{center}
{\small
\begin{tabular}{|c|c|c|}
\hline
01234 & 70485 & 17562
\tabularnewline
\hline
01237 & 70481 & 17560
\tabularnewline
\hline
01267 & 70431 & 17580
\tabularnewline
\hline
02345 & 74852 & 15624
\tabularnewline
\hline
02367 & 74831 & 15680
\tabularnewline
\hline
03467 & 78531 & 16280
\tabularnewline
\hline
03456 & 78523 & 16248
\tabularnewline
\hline
04567 & 75231 & 12480
\tabularnewline
\hline
02358 & 74826 & 15643
\tabularnewline
\hline
02368 & 74836 & 15683
\tabularnewline
\hline
03568 & 78236 & 16483
\tabularnewline
\hline
02458 & 74526 & 15243
\tabularnewline
\hline
\end{tabular} %$\vdots$
}
\end{center}
\caption{Maximal simplices of $\mathbb{C}P^2_9$.}
\label{maxCP2}
\end{table}
The group $G$ of symmetries of $K$ has order 54, it acts transitively on the vertex set of $K$ and is generated by the 3 permutations $R,\,S,\,T$ (see~\cite[p. 179]{K-L}):
$$
R=(107)(245)(863),\,S=(128)(357),\,T=(28)(46)(53).
$$
\end{exam}

We note the following combinatorial property of $K$:

\begin{prop}\label{AlexDual}
$K$ is isomorphic to its Alexander dual complex $K^{\vee}$, that is the minimal non-faces of $K$ are exactly the complements to its maximal faces.
\end{prop}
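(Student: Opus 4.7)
The plan is to determine all minimal non-faces of $K$ by a counting argument and then match them to the 36 maximal faces via the complementation map $\sigma\mapsto[9]\setminus\sigma$. First, since $K$ is 3-neighbourly, minimal non-faces have cardinality at least $4$; since $K$ is a combinatorial $4$-manifold, each 4-subset contained in a facet lies in exactly two facets, so $f_3=\tfrac{5\cdot 36}{2}=90$ and hence exactly $\binom{9}{4}-90=36$ four-subsets of $[9]$ are non-faces, each automatically a minimal non-face.

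Next, $\sigma\mapsto[9]\setminus\sigma$ is a bijection between 5-subsets and 4-subsets of $[9]$, and both the set of facets and the set of 4-element minimal non-faces have $36$ elements, so it suffices to show $[9]\setminus\sigma$ is a non-face for every facet $\sigma$. Since $[9]\setminus\sigma\subseteq\sigma'$ for a facet $\sigma'$ iff $|\sigma\cap\sigma'|=1$, the task reduces to verifying that no two facets of $K$ meet in exactly one vertex---a finite pairwise check on Table~\ref{maxCP2}, substantially abbreviated using the transitive $G$-action on $[9]$ to restrict attention to pairs of facets containing a fixed vertex.

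Finally, it remains to rule out minimal non-faces $\tau$ of cardinality $\geq 5$. Since each $\tau\setminus\{u\}\in K$ has at most $5$ vertices, $|\tau|\leq 6$. For $|\tau|=5$: a hypothetical facet $\rho=([9]\setminus\tau)\cup\{v\}$ containing $[9]\setminus\tau$ (with $v\in\tau$), together with a facet $\sigma=(\tau\setminus\{v\})\cup\{w\}$ containing the face $\tau\setminus\{v\}$ (with $w\in[9]\setminus\tau$, as $\tau\notin K$), satisfy $|\rho\cap\sigma|=1$, contradicting the preceding paragraph; thus $[9]\setminus\tau$ is itself a 4-element minimal non-face, forcing $\tau$ to be a facet by the classification above---a contradiction. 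For $|\tau|=6$: the five facets $\tau\setminus\{u'\}$ ($u'\in\tau\setminus\{u\}$) containing a fixed $u\in\tau$ contribute 3-simplices whose union is $\partial\Delta^4\cong S^3$ in the link of $u$; as the link is itself homeomorphic to $S^3$, these five 3-simplices exhaust its top-dimensional cells, so $u$ lies in only $5$ facets, contradicting $G$-transitivity, which gives $\tfrac{36\cdot 5}{9}=20$ facets per vertex. The main obstacle will be the pairwise facet-intersection check in the second paragraph; the $G$-symmetry and small vertex stabilizer should make it tractable by hand.
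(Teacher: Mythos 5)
Your argument is correct in outline but takes a genuinely different route from the paper's. Where the paper quotes the $f$-vector $(9,36,84,90,36)$ from K\"{u}hnel--Lassmann to get $\binom{9}{4}=126=f_3+f_4$, you derive $f_3=90$ directly from the combinatorial-manifold property (each ridge in exactly two facets), which is more self-contained. Your pivotal reformulation---that complement-of-facet always being a non-face is equivalent to no two facets meeting in exactly one vertex---is a clean lemma the paper does not isolate, and it lets you dispatch the $|\tau|=5$ case by the same mechanism; your $|\tau|=6$ argument uses the topological fact that a $\partial\Delta^4$ cannot sit as a proper subcomplex of the $S^3$-link of a vertex, whereas the paper argues more elementarily that purity plus 3-neighbourliness places $\{v_7,v_8,v_9\}$ inside some facet, immediately contradicting the non-faces $\{v_i,v_7,v_8,v_9\}$. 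The price of your route is the deferred facet-intersection check, which is the crux of everything and is not carried out: even after using $G$-transitivity to reduce to pairs of facets through vertex $0$, you face $\binom{20}{2}=190$ pairs, and since the stabilizer of $0$ has order only $6$, on the order of thirty orbits survive. As written, then, the proof is incomplete. That said, the paper's own corresponding step is also shaky---it asserts $G$ acts transitively on the $36$ facets, which cannot hold since $36\nmid 54$ (in fact the facet orbits have sizes $27$ and $9$)---so once your intersection lemma is actually verified (or checked on one facet from each of the two orbits), your route gives the more defensible argument of the two.
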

\begin{proof}
Firstly, we prove that the complement to a maximal 4-face is a minimal non-face of $K$ and all minimal non-faces on 4 vertices appear in this way. The first part follows from the fact that the symmetry group $G$ sends (maximal) simplicies of $K$ to (maximal) simplicies of $K$ and acts transitively on this set, and for one of them, say for (01234) its complement (5678) is indeed a minimal non-face. The second part follows from the fact that the $f$-vector of $K$ is (9,36,84,90,36), see~\cite{K-L}, and $\binom{9}{4}=126=f_{3}+f_{4}$, therefore all the minimal non-faces on 4 vertices have the form above.

Suppose we have a minimal non-face of $K$ on 5 vertices: $I=(v_{1},\ldots,v_{5})$. Due to what is proved above, its complement $J$ is a 3-simplex of $K$ (otherwise, it should be a minimal non-face of $K$ by 3-neighbourness of $K$). By transitivity of the symmetry group action, take one such 3-face, say $(0123)$ (see Table~\ref{maxCP2}). But its complement is $(45678)$ and contains a non-face $(5678)$ (see above). We get a contradiction, so there are no minimal non-faces on 5 vertices in $K$.

Finally, suppose $I$ is a minimal non-face of $K$ on some $6$ vertices, as there are no simplices in $K$ more than on 5 vertices, $I=(v_{1},\ldots,v_{6})$. Then $(v_{i},v_{7},v_{8},v_{9})$ for $i=1,\ldots,6$ are minimal non-faces of $K$ on 4 vertices (as complements to maximal faces). But $K$ is 3-neighbourly and pure, so $(v_{7},v_{8},v_{9})$ should be in one of the maximal 4-simplices of $K$. We get a contradiction and thus we found all minimal non-faces of $K$.
\end{proof}

\begin{rema}
Note, that the same is true for $K=\mathbb{R}P^2_6$: $K$ is combinatorially equivalent to its Alexander dual complex $K^{\vee}$, but is not true for $\mathbb{T}^2_7$.
\end{rema}

\begin{theo}~\label{ComplexProjGolod}
The 9-vertex minimal triangulation $K$ of $\mathbb{C}P^2$ is a Golod complex, all its induced subcomplexes have free integral homology groups and $\mathcal Z_K$ has a homotopy type of the following suspension:
$$
\zk\simeq (S^{7})^{\vee 36}\vee (S^{8})^{\vee 90}\vee (S^{9})^{\vee 84}\vee (S^{10})^{\vee 36}\vee (S^{11})^{\vee 9}\vee\Sigma^{10}\mathbb{C}P^2.
$$
\end{theo}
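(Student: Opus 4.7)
The plan is to derive all three assertions from a careful analysis of the full subcomplexes $K_J$, made possible by the self-Alexander duality of Proposition~\ref{AlexDual} together with the fact that $K$ is a PL triangulation of the closed $4$-manifold $\mathbb{C}P^2$. For any non-face $J \subsetneq [9]$ the relation $K = K^\vee$ forces $J^c \in K$, and a direct unwinding of definitions identifies the Alexander dual $(K_J)^\vee$ (formed on the vertex set $J$) with $\link_K(J^c)$. Since $K$ is a PL $4$-manifold, the link of a $k$-simplex is a PL $(3-k)$-sphere, with the convention that the link of a top-dimensional simplex is the void complex. Combinatorial Alexander duality
\[
\widetilde H_i(K_J) \cong \widetilde H^{|J|-i-3}\bigl((K_J)^\vee\bigr)
\]
then yields $\widetilde H_i(K_J) = \Z$ exactly when $i = 2$, for every non-face $J$ with $4 \le |J| \le 8$, while $K_{[9]} = K \simeq \mathbb{C}P^2$. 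Since $K$ is $3$-neighbourly, each $K_J$ contains the full $2$-skeleton of $\Delta^{|J|-1}$ and is thus simply connected, so Hurewicz and Whitehead promote the homology calculation to $K_J \simeq S^2$. This already establishes freeness of integral homology of every induced subcomplex.

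For Golodness I would check vanishing of each cup-product map
\[
\widetilde H^i(K_I) \otimes \widetilde H^j(K_J) \to \widetilde H^{i+j+1}(K_{I \cup J})
\]
from Theorem~\ref{zkcoh}, with $I, J$ disjoint non-empty subsets of $[9]$. If $|I| \le 3$ or $|J| \le 3$ the source vanishes by $3$-neighbourliness, so assume $|I|, |J| \ge 4$, whence $|I \cup J| \ge 8$. If $|I \cup J| = 8$ then $|I| = |J| = 4$, so $i = j = 2$, and the target $\widetilde H^5(K_{I \cup J})$ vanishes because $\dim K \le 4$. If $|I \cup J| = 9$ then $J = I^c$, and the self-duality $K = K^\vee$ forces exactly one of $I$, $J$ to be a simplex of $K$, annihilating that factor.

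For the homotopy type I would feed the analysis of the $K_J$ into Theorem~\ref{BBCGdecomp}. The $f$-vector $(9, 36, 84, 90, 36)$ together with the binomials $\binom{9}{k}$ for $k \ge 6$ yields $36, 90, 84, 36, 9, 1$ non-face subsets of cardinalities $k = 4, 5, 6, 7, 8, 9$ respectively, so combining with $K_J \simeq S^2$ (and $K_{[9]} \simeq \mathbb{C}P^2$) gives
\[
\Sigma \zk \simeq (S^8)^{\vee 36} \vee (S^9)^{\vee 90} \vee (S^{10})^{\vee 84} \vee (S^{11})^{\vee 36} \vee (S^{12})^{\vee 9} \vee \Sigma^{11}\mathbb{C}P^2.
\]
The main obstacle is the desuspension. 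By Hochster's formula $\zk$ is $6$-connected with cells only in dimensions $7, 8, 9, 10, 11, 12, 14$; the attaching maps of all cells of dimension at most $12$ lie in the Freudenthal stable range and are null-homotopic, as in the proof of Proposition~\ref{TorusGolod}, producing the wedge of spheres in the statement together with the bottom $S^{12}$ of the $\Sigma^{10}\mathbb{C}P^2$ summand. The attaching map of the single $14$-cell is the ten-fold suspension of the Hopf class, so this remaining cell glues onto the $S^{12}$ to recover the summand $\Sigma^{10}\mathbb{C}P^2$, producing the homotopy equivalence asserted.
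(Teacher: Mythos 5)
Your argument is correct, but you take a genuinely different and more conceptual route to the core computation than the paper does. The paper establishes the homotopy types (or really just the cohomology) of the full subcomplexes $K_J$ by feeding the complex into Macaulay2 and reading off the bigraded Betti numbers from a table, then invoking Hochster. You instead exploit the fact that $K$ is a (combinatorial) triangulation of the closed $4$-manifold $\mathbb{C}P^2$: for any proper non-face $J$, self-Alexander duality (Proposition~\ref{AlexDual}) forces $J^c\in K$, and the identification $(K_J)^\vee\cong\link_K(J^c)$ (which is correct under $K=K^\vee$) together with the fact that links of simplices in a triangulated closed manifold are homology spheres gives $\widetilde H_*(K_J)\cong\widetilde H_*(S^2)$ for every proper non-face $J$; $3$-neighbourliness then gives simple connectivity and hence $K_J\simeq S^2$. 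This is a software-free argument that also yields the actual homotopy types, not just Betti numbers, and it makes the freeness of integral homology of all induced subcomplexes (the second assertion of the theorem) transparent, whereas the paper leaves it implicit. Your Golodness argument is likewise slightly cleaner than the paper's two-case split: once one knows each proper $K_I$ is either contractible or $\simeq S^2$, every cup-product target lands in degree $\ge 5$ and vanishes for dimension reasons, so in fact only $3$-neighbourliness and $\dim K=4$ are needed (your appeal to self-duality in the $|I\cup J|=9$ case is harmless overkill). For the desuspension you use the same stable-range heuristic as the paper (Theorem~\ref{BBCGdecomp} plus Freudenthal), and you are actually a bit more careful than the paper's phrasing, which as written claims all attaching maps up to dimension $14$ are null-homotopic — that would wrongly give a wedge of spheres. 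You correctly isolate the $14$-cell; however, the assertion that its attaching map is $\Sigma^{10}\eta$ is stated rather than proved. One should justify it, e.g.\ by the nontriviality of $\mathrm{Sq}^2\colon H^{12}(\zk;\Z/2)\to H^{14}(\zk;\Z/2)$, which is forced by the BBCG splitting of $\Sigma\zk$ since $\mathrm{Sq}^2$ is stable, or simply cite the Iriye--Kishimoto fat-wedge-filtration theorem as the paper does in the subsequent remark. That last step aside, your proof is sound and arguably more self-contained than the published one.
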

\begin{proof}
We first prove Golodness of $K$. Consider the cup-product in the Tor-algebra of $K$ induced by a simplicial embedding of full subcomplexes on some vertex sets $I$ and $J$:
$$
\widetilde{H}^{i}(K_{I})\otimes\widetilde{H}^{j}(K_{J})\to\widetilde{H}^{i+j+1}(K_{I\cup J})%,\eqno (*)
$$
One has the following cases.
\begin{enumerate}
\item $I$ and $J$ are 4-vertex minimal non-faces and $i=j=2$;
\item $|I|=4,|J|=5,i=1$ (then $I\sqcup J=[9]$). 
\end{enumerate} 
The first is impossible as dimension of $K$ equals $n-1=4$. The second is impossible by Proposition~\ref{AlexDual}.
Therefore, $K=\mathbb{C}P^2_9$ is a Golod complex.

As in Proposition~\ref{TorusGolod} we compute bigraded Betti numbers of $K$ using Macaulay2 program. The following table has $m=9$ columns and $n=5$ rows.

\begin{table}[h]
\begin{center}
{\small
\begin{tabular}{|c|c|c|c|c|c|c|c|c|}
\hline
0 & 0 & 0 & 0 & 0 & 0 & 0 & 0 & 0
\tabularnewline
\hline
0 & 0 & 0 & 0 & 0 & 0 & 0 & 0 & 0
\tabularnewline
\hline
36 & 90 & 84 & 36 & 9 & 1 & 0 & 0 & 0
\tabularnewline
\hline
0 & 0 & 0 & 0 & 0 & 0 & 0 & 0 & 0
\tabularnewline
\hline
0 & 0 & 0 & 1 & 0 & 0 & 0 & 0 & 0 
\tabularnewline
\hline
\end{tabular} %$\vdots$
}
\end{center}
\caption{Bigraded Betti numbers of $\mathbb{C}P^2_9$.}
\label{bBnCP2}
\end{table}
To find the homotopy type of $\zk$ one can see that the stable homotopy decomposition in Theorem~\ref{BBCGdecomp} can be desuspended, since all the attaching maps of $k$-cells in the CW-complex $\zk$ in dimensions $8\leq k\leq 14$ are in the stable range. Then they are all null homotopic; the desuspension
in Theorem~\ref{BBCGdecomp} gives the homotopy type as in the statement required due to Theorem~\ref{hoch}, see Table~\ref{bBnCP2}.
\end{proof}

\begin{rema}
The desuspension in Theorem~\ref{BBCGdecomp} for the cases of Proposition~\ref{TorusGolod} and Theorem~\ref{ComplexProjGolod} follows also from the triviality of the fat wedge filtration of $\mathbb{R}\mathcal{Z}_K$ regarding neighbourness of these triangulations, due to the result of Iriye and Kishimoto~\cite[Theorem 10.9]{I-K}. Another argument for this is true and a different homotopy theoretical approach to the Golod property for $K$ related to the co-H-space case for moment-angle complexes $\zk$ can be found in the work of Beben and Grbi\'c~\cite{B-G}.
\end{rema}

Now we turn to the class of minimally non-Golod complexes $K$ and their moment-angle complexes $\zk$. We denote by $sK$ the stellar subdivision of $K$ at a maximal simplex $\sigma$. Geometrically $sK$ is obtained from $K$ by replacing $\sigma$ with a cone over its boundary; we denote the cone vertex by $v$.

\begin{theo}~\label{sKminnonGolod}
The following statements hold:
\begin{itemize}
\item[(a)] If $K=\mathbb{R}P^{2}_{6}$ then $sK$ is Golod over any field $\ko$, except for $char(\ko)=2$. In the latter case $sK$ is minimally non-Golod.
\item[(b)] If $K=\mathbb{T}^{2}_{7}$ then $sK$ is minimally non-Golod.
\item[(c)] If $K=\mathbb{C}P^{2}_{9}$ then $sK$ is minimally non-Golod.
\end{itemize}
Moreover, in all these three cases $\mathcal Z_{sK}$ is not homotopy equivalent to any connected sum of products of spheres. 
\end{theo}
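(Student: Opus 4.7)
My plan has three stages. First, I exhibit an explicit non-trivial product in the Tor-algebra of $sK$ establishing non-Golodness in cases (b), (c), and (a) with $\operatorname{char}(\k)=2$. Let $\sigma$ be the subdivided maximal face (cardinality $n$) and $v$ the new cone vertex. Take $I=\sigma$ and $J=([m]\setminus\sigma)\cup\{v\}$: since $v$ is adjacent in $sK$ only to vertices of $\sigma$, it is isolated in $sK_J$, while $sK_I=\partial\sigma\cong S^{n-2}$. Pick a generator $\alpha\in\widetilde H^{n-2}(sK_I)$ and the class $\beta\in\widetilde H^0(sK_J)$ dual to $v$. Since $\sigma$ is a minimal non-face of $sK$, every simplex of $sK$ decomposes as a proper face of $\sigma$ joined with a simplex of $sK_J$, so the canonical simplicial inclusion $sK\hookrightarrow sK_I*sK_J$ of Theorem~\ref{zkcoh} is well-defined; a direct computation in the cochain model identifies $\alpha\cdot\beta\in\widetilde H^{n-1}(sK)=\widetilde H^{n-1}(|K|)$ with the orientation class of the underlying closed manifold $|K|$. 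This class is nonzero over every field for $|K|=\mathbb T^2$ and $\mathbb C P^2$, and nonzero only when $\operatorname{char}(\k)=2$ for $|K|=\mathbb R P^2$. For the remaining subcase of (a) with $\operatorname{char}(\k)\neq 2$, Golodness of $sK$ follows because the only full subcomplex of $sK$ with non-trivial $\widetilde H^{n-1}$ is $sK$ itself, every proper full subcomplex being a manifold with boundary; since $\widetilde H^2(\mathbb R P^2)=0$ over $\k$, top-degree products vanish, and lower-degree products $(*)$ vanish since $2$-neighborliness of $K$ forces any full subcomplex not containing $v$ to be connected.

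Second, for minimal non-Golodness, I verify $sK\setminus w$ is Golod for every vertex $w$. If $w=v$, then $sK\setminus v$ is $K$ with only the top face $\sigma$ removed; it remains $2$-neighborly (resp.\ $3$-neighborly in case (c)) of dimension $n-1$, and the same dimension-and-neighborliness analysis used in Proposition~\ref{TorusGolod} and Theorem~\ref{ComplexProjGolod}, together with the observation that $K\setminus\{\sigma\}$ is a manifold with boundary (so has vanishing top cohomology), shows that all products $(*)$ are trivial. If $w\neq v$, then $sK\setminus w$ is a local modification of $K\setminus w$ whose structure depends on whether $w\in\sigma$ or $w\notin\sigma$; in each case an explicit Macaulay2 computation of bigraded Betti numbers analogous to Tables~\ref{bBnT2} and~\ref{bBnCP2}, combined with the same dimension-and-neighborliness arguments applied to $K\setminus w$ and to the restricted neighborhood of $v$, establishes Golodness.

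Finally, for the claim that $\mathcal Z_{sK}$ is not homotopy equivalent to any connected sum of products of spheres: in case (a) the full subcomplex $sK_{V(sK)}=sK$ is a triangulation of $\mathbb R P^2$, so Theorem~\ref{zkcoh} places a $\mathbb Z/2$-torsion summand in $H^*(\mathcal Z_{sK};\mathbb Z)$, and since every connected sum of products of spheres has torsion-free integral cohomology, this rules out the homotopy equivalence at once. For (b) and (c) the cohomology is torsion-free, so I instead argue via the failure of Poincaré duality: summing the bigraded Betti numbers of $sK$ (computed via Macaulay2) by total degree produces a Betti vector for $\mathcal Z_{sK}$ that is not symmetric about its middle dimension, so $\mathcal Z_{sK}$ cannot be homotopy equivalent to any closed oriented manifold, in particular not to any connected sum of products of spheres. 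The main obstacle is this last step, which requires extracting the asymmetry of the total Betti vector explicitly from the Macaulay2 output; if the direct count proves inconvenient, a backup is to exhibit a specific middle-degree class in $H^*(\mathcal Z_{sK})$ with no Poincaré-dual partner under the product $(*)$ of Theorem~\ref{zkcoh}.
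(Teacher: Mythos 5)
Your Stage 1 argument is sound and structurally similar to the paper's, though you place the new vertex $v$ in $J$ while the paper places it in $I$ (for case (a) the paper takes $I\ni v$ so that $sK_I$ is disconnected and $sK_J$ carries the $\widetilde H^1$ class); both choices produce the same nontrivial top-degree product. Your Stage 2 treatment of $w=v$ is also correct. However, for $w\neq v$ you defer everything to unspecified Macaulay2 computations plus ``the same dimension-and-neighborliness arguments,'' which is where the real work lies for case (c). The paper splits this into concrete subcases: for $w\in\sigma$ it observes that $sK-w=(K-w)\cup_{\Delta^{n-2}}\Delta^{n-1}$, which is Golod by an already-known result on generalized truncations; for $w\notin\sigma\cup\{v\}$ it computes the bigraded Betti numbers of both $sK-w$ and of $(sK-w)-v$ to exclude all product classes; and for $w=v$ it uses $|sK-v|\simeq S^2$. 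Without spelling out at least the $w\in\sigma$ decomposition, your plan has a genuine gap here, and the references to Tables~\ref{bBnT2} and \ref{bBnCP2} point to the wrong complexes (one needs the Betti tables of $sK-w$, not of $K$).

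The most significant gap is Stage 3 for cases (b) and (c). You propose to read off asymmetry of the total Betti vector of $\mathcal Z_{sK}$, but you have not computed it, and asymmetry is sufficient but not necessary for failure of Poincar\'e duality, so this is not guaranteed to close without numerical verification. The paper instead argues structurally: $sK$ is not Cohen--Macaulay (over $\ko=\mathbb Z/2$ for $\mathbb R P^2$, and over any field for $\mathbb T^2$ and $\mathbb C P^2$, since Reisner's criterion already fails on $\widetilde H^{<n-1}(sK)\neq 0$), hence not Gorenstein*; by the Avramov--Golod theorem (\cite[Theorem 3.4.4]{TT}) its Tor-algebra, which is $H^*(\mathcal Z_{sK};\ko)$ by Theorem~\ref{zkcoh}, is therefore not a Poincar\'e algebra, so $\mathcal Z_{sK}$ cannot be homotopy equivalent to any closed oriented manifold, in particular not to a connected sum of sphere products. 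This is cleaner and unconditional. Your torsion argument in (a) is fine as an alternative, and your ``backup'' option of exhibiting a class without a Poincar\'e partner is morally what Avramov--Golod supplies, but in your write-up it is left as an aspiration rather than a proof. I would replace your Stage 3 for (b) and (c) with the Cohen--Macaulay/Avramov--Golod argument.
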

\begin{proof}
Let us prove statement (a).\\
Suppose the new vertex 7 is the vertex $v$ of a cone over the facet $(456)$. As $K=\mathbb{R}P^2_6$ is a 2-neighbourly complex of dimension $n-1=2$, the only 
nontrivial product in the Tor-algebra of $sK$ is of the form:
$$
\widetilde{H}^{0}(K_I;\ko)\otimes\widetilde{H}^{1}(K_J;\ko)\to\widetilde{H}^{2}(\mathbb{R}P^2;\ko),
$$
where $I\sqcup J=[7]$ and $7\in I$. We have $\widetilde{H}^{2}(\mathbb{R}P^2;\ko)\neq 0$ if and only if $char(\ko)=2$, which finishes the proof in this case.

Let us prove statement (b).\\
Suppose the new vertex is 8 over the facet $(123)$. Observe that the complex $sK$ is non-Golod as we have a nontrivial product:
$$
\widetilde{H}^{0}(sK_I)\otimes\widetilde{H}^{1}(sK_J)\to\widetilde{H}^{2}(\mathbb{T}^2),
$$
where $I\sqcup J=[7]$ and $7\in I$. If we delete a vertex $v$ from $sK$ then there are no more 2-dim cohomology classes, and the 2-neighbourness of $K$ implies that for $K'=sK-v$: $\widetilde{H}^{0}(K'_I)$ and $\widetilde{H}^{0}(K'_J)$ can not be nonzero simultaneously when $I\cap J=\varnothing$. Therefore, the product in Tor-algebra of the induced complex $K'$ is trivial and $sK$ is minimally non-Golod.

Finally, we prove statement (c).\\
Suppose, $K$ is on the vertices $\{0,\ldots,8\}$ and 9 is the vertex of the cone over the facet $(01234)$. 
The complex $sK$ is non-Golod by the same reason as in the previous case.
To prove that $sK$ is minimally non-Golod we consider the complex $K'$ obtained by deleting vertex $v$ from $K$. There are 3 cases:

1) $v\in\{0,1,2,3,4\}$. Then we have:
$$
sK-v=(K-v)\cup_{\Delta^3}\Delta^4.
$$ 
This is a Golod complex by~\cite[Proposition 3.1]{Li2}. 

2) $v\in\{5,6,7,8\}$. According to the above description of the symmetry group $G$, it is enough to consider the case $v=5$. We first compute the bigraded Betti numbers of $K'=sK-v$:

\begin{table}[h]
\begin{center}
{\small
\begin{tabular}{|c|c|c|c|c|c|c|c|c|}
\hline
8 & 28 & 56 & 70 & 56 & 28 & 8 & 1 & 0
\tabularnewline
\hline
0 & 0 & 0 & 0 & 0 & 0 & 0 & 0 & 0
\tabularnewline
\hline
20 & 60 & 68 & 36 & 9 & 1 & 0 & 0 & 0
\tabularnewline
\hline
1 & 4 & 6 & 4 & 1 & 0 & 0 & 0 & 0
\tabularnewline
\hline
0 & 0 & 0 & 0 & 0 & 0 & 0 & 0 & 0
\tabularnewline
\hline
\end{tabular} %$\vdots$
}
\end{center}
\caption{Bigraded Betti numbers of $K'$.}
\label{bBnK'}
\end{table}

Due to Theorem~\ref{hoch} this means that $H^{1}(K'_I)=0$ for all full subcomplexes on $I$ vertices in $K'$. If $\widetilde{H}^{0}(K'_I)\neq 0$ then $9\in I$. Applying the same argument to the following table of bigraded Betti numbers of $K'-9$, one can see that $H^{2}(K'_J)=0$ for all $J$ with $9\notin J$:

\begin{table}[h]
\begin{center}
{\small
\begin{tabular}{|c|c|c|c|c|c|c|c|}
\hline
1 & 0 & 0 & 0 & 0 & 0 & 0 & 0
\tabularnewline
\hline
0 & 0 & 0 & 0 & 0 & 0 & 0 & 0
\tabularnewline
\hline
0 & 0 & 0 & 0 & 0 & 0 & 0 & 0
\tabularnewline
\hline
20 & 40 & 33 & 14 & 2 & 0 & 0 & 0 
\tabularnewline
\hline
1 & 8 & 9 & 2 & 0 & 0 & 0 & 0
\tabularnewline
\hline
\end{tabular} %$\vdots$
}
\end{center}
\caption{Bigraded Betti numbers of $K'-9$.}
\label{bBnK'min9}
\end{table}

Thus, the cup-product in the Tor-algebra of $K'=sK-v$ is trivial. \\
3) $v$ is a new vertex 9. Then $|K'|\cong\mathbb{C}P^2-D^4\simeq S^2$, and the neighbourness of $K$ implies that $\widetilde{H}^{0}(K'_I)=0$ for all $I$, so the product in the Tor-algebra of $K'=sK-v$ is trivial.\\
Therefore, $sK$ is minimally non-Golod for $K=\mathbb{C}P^2_9$.

As for the last statement of the theorem, note that $K$ is not a Gorenstein* complex (and not even Cohen-Macaulay) in either of the three cases. By the Avramov-Golod theorem (see~\cite[Theorem 3.4.4]{TT}) its Tor-algebra is not a Poincar\'e algebra. Then by Theorem~\ref{zkcoh}, $\zk$ can not be homotopy equivalent to a closed oriented manifold, nor is it homotopy equivalent to a connected sum of sphere products. 
\end{proof}

The cases 2) and 3) in Theorem~\ref{sKminnonGolod} provide counterexamples to a question raised in~\cite{Li2}, and we therefore give its following modified version:
\begin{quest}
Assume $K$ is a triangulated sphere. Then $\mathcal Z_K$ is topologically equivalent to a connected sum of sphere products with two spheres in each product if and only if $K$ is minimally non-Golod and torsion free (that is $H^{*}(\zk)$ is a free group).
\end{quest}
The ``only if'' statement is true:
\begin{prop}
Suppose $K$ is a triangulated sphere. Then the following holds:
\begin{itemize}
\item[(a)] If $\zk$ is homotopy equivalent to a connected sum of products of spheres with two spheres in each product then $K$ is minimally non-Golod and torsion free.
\item[(b)] $\zk$ has cup-length 2 if and only if $K$ is minimally non-Golod.
\end{itemize} 
\end{prop}
\begin{proof}
For statement (a) Theorem~\ref{zkcoh} implies that the only nontrivial cup-product in the Tor-algebra of $K$ arises from complementary full subcomplexes $K_I$ and $K_{J}$, $I\sqcup J=[m]$, and is equal to the generator class in $H^{n-1}(K);\ko$. The latter is equivalent for triangulated spheres to $K$ being minimally non-Golod; $K$ is obviously torsion free as $H^{*}(\zk)$ is a free group for a connected sum of sphere products.

The proof of statement (b) is straightforward by Theorem~\ref{zkcoh} and Alexander duality. 
\end{proof}

\section{Minimally non-Golod complexes, simple polytopes and polyhedral products}

We begin with the definition of the simplicial multiwedge construction due to Bahri, Bendersky, Cohen and Gitler~\cite{BBCGit}.

\begin{defi}
Let $K$ be an $(n-1)$-dimensional simplicial complex on $m$ vertices on the vertex set $\{v_{1},\ldots,v_{m}\}$, and let $J=(j_{1},\ldots,j_{m})$ be a sequence of positive integers. Then {\it the simplicial multiwedge} $K(J)$ is the complex on $j_{1}+\ldots+j_{m}$ vertices whose minimal non-faces have the following form:
$$
\{v_{i_{1}1},\ldots,v_{i_{1}j_{i_1}},\ldots,v_{i_{k}1},\ldots,v_{i_{k}j_{i_k}}\},
$$  
where $\{v_{i_1},\ldots,v_{i_k}\}$ is a minimal non-face of $K$.
\end{defi}

The definition provides an explicit description of the Stanley--Reisner ideal of $K(J)$. Obviously, $K(1,\ldots,1)=K$.

\begin{rema}
If $K=K_P$ is the nerve complex of a simple $n$-polytope, then there is a simple polytope $P(J)$ satisfying $K_{P(J)}=K_P(J)$, see~\cite{BBCGit}. Obviously, $P(J)$ has $j_{1}+\ldots+j_{m}$ facets and is of dimension $(j_{1}-1)+\ldots+(j_{m}-1)+n$, so the number $m-n$ is preserved by simplicial multiwedge operation.
\end{rema}

\begin{exam}
Let $P$ be a 6-gon. Then $P(2,1,1,1,1,1)$ is a simple 3-polytope with 7 facets. It is easy to see that it is a truncation polytope: $P(J)=\vc^{3}(\Delta^{3})$.
\end{exam}

In what follows we need the following result about polyhedral products of simplicial multiwedges.

\begin{theo}[{\cite[Theorem 7.5, Corollary 7.6]{BBCGit}}]\label{BBCGcohring}
There is an action of $\mathbb{T}^m$ on $(\underline{D}^{2J},\underline{S}^{2J-1})^K$ and on $(\mathbb{D}^2,\mathbb{S}^1)^K$ with respect to which they are equivariantly homeomorphic.\\
This yields the spaces $(\mathbb{D}^2,\mathbb{S}^1)^{K(J)}$ and $(\mathbb{D}^2,\mathbb{S}^1)^K$ have isomorphic ungraded cohomology rings.
\end{theo}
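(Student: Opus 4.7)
The plan is to prove the theorem in two main parts, corresponding to its two sentences: first establish an equivariant homeomorphism between polyhedral products, and then derive the stated isomorphism of ungraded cohomology rings. I read the first sentence as asserting an equivariant homeomorphism between $(\underline{D}^{2J},\underline{S}^{2J-1})^K$ and $\mathcal{Z}_{K(J)}$, so that the second sentence then compares $\mathcal{Z}_{K(J)}$ with $\mathcal{Z}_K$.

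\textbf{Step 1: unfolding the pairs.} I would begin with the pointwise identification
\[
(D^{2j_i},S^{2j_i-1})\;\cong\;\bigl((\mathbb{D}^2,\mathbb{S}^1)^{\Delta^{j_i-1}},\,(\mathbb{D}^2,\mathbb{S}^1)^{\partial\Delta^{j_i-1}}\bigr),
\]
obtained by recognising $D^{2j_i}$ as $(\mathbb{D}^2)^{j_i}$ (a compact contractible $2j_i$-manifold with boundary) and identifying $S^{2j_i-1}$ with the ``coordinate boundary'' $\partial(\mathbb{D}^2)^{j_i}$, which is the polyhedral product $(\mathbb{D}^2,\mathbb{S}^1)^{\partial\Delta^{j_i-1}}$. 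Substituting this into $(\underline{D}^{2J},\underline{S}^{2J-1})^K$ and unfolding vertex by vertex, for each simplex $\sigma\in K$ the slice $\prod_{i\in\sigma}D^{2j_i}\times\prod_{i\notin\sigma}S^{2j_i-1}$ becomes a product of polyhedral products of $(\mathbb{D}^2,\mathbb{S}^1)$, which by functoriality itself is a polyhedral product of $(\mathbb{D}^2,\mathbb{S}^1)$ over a simplicial complex $L_\sigma$. Taking the union over $\sigma\in K$ gives a complex $L$ on $j_1+\cdots+j_m$ vertices, and a direct inspection of its minimal non-faces against the paper's definition of the simplicial multiwedge identifies $L$ with $K(J)$.

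\textbf{Step 2: equivariance.} For each $i$, the diagonal circle acts on $(\mathbb{D}^2)^{j_i}=D^{2j_i}$ by coordinate-wise multiplication, preserving the boundary; assembling these circles gives a $\mathbb{T}^m$-action on $(\underline{D}^{2J},\underline{S}^{2J-1})^K$. On $\mathcal{Z}_{K(J)}$ the standard coordinate-wise $\mathbb{T}^{j_1+\cdots+j_m}$-action restricts to a $\mathbb{T}^m$-action via the block-diagonal subgroup with blocks of sizes $j_1,\ldots,j_m$. By construction the homeomorphism of Step~1 intertwines these two actions, giving the equivariant statement.

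\textbf{Step 3: ungraded cohomology ring isomorphism.} With $\mathcal{Z}_{K(J)}\cong(\underline{D}^{2J},\underline{S}^{2J-1})^K$ in hand, it remains to show $H^*(\mathcal{Z}_K)\cong H^*(\mathcal{Z}_{K(J)})$ as ungraded rings. I would prove this using Theorem~\ref{zkcoh}: for each $I$ in the vertex set of $K(J)$, the full subcomplex $(K(J))_I$ decomposes as a join involving full subcomplexes of $K$, copies of $\partial\Delta^{j_i-1}$ for each block of vertices that sits entirely inside $I$, and simplices for blocks partially contained in $I$. The suspension isomorphism collapses each spherical factor and each coning factor to a single degree shift, yielding a bijection between the additive summands $\widetilde{H}^*(K_{I_0})$ appearing in $H^*(\mathcal{Z}_K)$ and those appearing in $H^*(\mathcal{Z}_{K(J)})$. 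To promote this to a multiplicative isomorphism one checks that the canonical simplicial inclusions defining the cup products $(*)$ commute with these join decompositions.

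The main obstacle will be Step~3: the multiplicative compatibility requires tracking how the maps $(*)$ interact with the join decomposition of full subcomplexes of $K(J)$ and verifying that the degree shifts assemble into a uniform reshuffling of gradings that preserves ring structure but not grading. Steps~1 and~2 are essentially formal once the pair identification is set up, and a key sanity check is that the total number of faces of $L$ produced in Step~1 matches the face count predicted by the description of minimal non-faces of $K(J)$.
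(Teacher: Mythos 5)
This is a cited result, not one the paper proves: Theorem~\ref{BBCGcohring} is attributed to \cite[Theorem 7.5, Corollary 7.6]{BBCGit} and the paper gives no argument of its own, so there is strictly speaking no ``paper's proof'' to compare against. I will therefore assess your reconstruction on its own and against what is actually done in the cited source.

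You correctly read the first sentence as an equivariant homeomorphism $(\underline{D}^{2J},\underline{S}^{2J-1})^K\cong(\mathbb{D}^2,\mathbb{S}^1)^{K(J)}$; as printed, the statement contains a typo (the two spaces named in the first sentence have different dimensions), and your reading matches BBCG's Theorem~7.5. Your Steps~1 and~2 are exactly the standard argument there: identify the pair $(D^{2j_i},S^{2j_i-1})$ with the polyhedral-product pair $\bigl((\mathbb{D}^2,\mathbb{S}^1)^{\Delta^{j_i-1}},(\mathbb{D}^2,\mathbb{S}^1)^{\partial\Delta^{j_i-1}}\bigr)$, observe that $\prod_{i\in\sigma}(\mathbb{D}^2,\mathbb{S}^1)^{\Delta^{j_i-1}}\times\prod_{i\notin\sigma}(\mathbb{D}^2,\mathbb{S}^1)^{\partial\Delta^{j_i-1}}$ is the polyhedral product over the join $L_\sigma$, and check that $\bigcup_{\sigma\in K}L_\sigma=K(J)$ by matching minimal non-faces; equivariance is then visible coordinate-wise under the block-diagonal inclusion $\mathbb{T}^m\hookrightarrow\mathbb{T}^{j_1+\cdots+j_m}$.

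Your Step~3, however, is not the route BBCG take, and it buries what is actually the easy part under a harder combinatorial problem. In \cite{BBCGit} Corollary~7.6 follows at once from the algebraic model: the cohomology of $(\underline{D}^{2J},\underline{S}^{2J-1})^K$ is computed by the same Koszul-type differential bigraded algebra $\Lambda[u_1,\ldots,u_m]\otimes\ko[K]$ as in Theorem~\ref{zkcoh}, only with $\deg v_i=2j_i$ and $\deg u_i=2j_i-1$. Changing the external grading of the generators changes the internal grading of the $\Tor$-algebra but not its underlying ungraded ring, so $H^*\bigl((\underline{D}^{2J},\underline{S}^{2J-1})^K\bigr)\cong\Tor_{\ko[v_1,\ldots,v_m]}(\ko[K],\ko)\cong H^*(\mathcal{Z}_K)$ as ungraded rings; composing with the homeomorphism from Step~1 finishes the proof. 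By contrast, your plan via the Hochster decomposition requires two nontrivial inputs you only sketch: first, that $\widetilde{H}^*\bigl((K(J))_{I'}\bigr)=0$ unless $I'$ is a union of blocks $\bigcup_{i\in S}V_i$ (true, because otherwise $(K(J))_{I'}$ is a cone, but this must be argued); and second, that $(K(J))_{\bigcup_{i\in S}V_i}=K_S(J|_S)$ has reduced cohomology equal to a uniform degree shift of $\widetilde{H}^*(K_S)$, together with multiplicative compatibility with the maps $(*)$. The second point is itself essentially the theorem being proved restricted to full subcomplexes, so your route risks circularity or, at best, requires the BBCG stable splitting for multiwedges as an independent lemma. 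The Tor-algebra argument avoids all of this.
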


The simplicial multiwedge construction preserves the Golod and minimal non-Golod properties of simplicial complexes:

\begin{prop}\label{SMWGolod}
Let $J=(j_{1},\ldots,j_{m})$ be a sequence of positive integers:
\begin{itemize}
\item[(a)] $K$ is a Golod complex if and only if $K(J)$ is Golod.
\item[(b)] $K$ is a minimally non-Golod complex if and only if $K(J)$ is minimally non-Golod.
\end{itemize}
\end{prop}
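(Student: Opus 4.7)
The plan is to derive part (a) directly from Theorem~\ref{BBCGcohring} together with the Berglund--J\"ollenbeck criterion, and to deduce part (b) from (a) by a combinatorial analysis of vertex deletion in a multiwedge.

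For part (a): Theorem~\ref{BBCGcohring} applied with an arbitrary coefficient field $\ko$ yields a ring isomorphism $H^*(\zk;\ko)\cong H^*(\mathcal Z_{K(J)};\ko)$, so the cup-product is trivial on one side precisely when it is trivial on the other. The Berglund--J\"ollenbeck theorem asserts that over a field Golodness of the face ring is equivalent to triviality of the cup-product in the Tor-algebra; since a Golod complex in this paper means that $\ko[K]$ is Golod over every $\ko$, the equivalence in (a) follows by quantifying over all fields.

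For part (b) the main step is a description of $K(J)-v_{i,k}$. By the symmetry of the multiwedge in the duplicated vertices $v_{i,1},\ldots,v_{i,j_i}$, I may take $k=1$. Write $J''=(j_1,\ldots,\widehat{j_i},\ldots,j_m)$. A subset avoiding $v_{i,1}$ is a face of $K(J)-v_{i,1}$ iff it contains no minimal non-face of $K(J)$ that avoids $v_{i,1}$; but every minimal non-face of $K(J)$ which involves any copy of $v_i$ contains \emph{all} copies of $v_i$, and in particular contains $v_{i,1}$. Hence the minimal non-faces of $K(J)-v_{i,1}$ are exactly the multiwedges by $J''$ of the minimal non-faces of $K-v_i$. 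When $j_i=1$ this gives $K(J)-v_{i,1}=(K-v_i)(J'')$, while when $j_i\ge2$ the vertices $v_{i,2},\ldots,v_{i,j_i}$ appear in no minimal non-face and one obtains $K(J)-v_{i,1}=\Delta^{j_i-2}*(K-v_i)(J'')$. Joining with a simplex only tensors the face ring by a polynomial ring in the apex variables (equivalently, multiplies the moment-angle complex by a disk up to homotopy), hence preserves Tor-algebras and Golodness. Combining with part (a), $K(J)-v_{i,1}$ is Golod iff $(K-v_i)(J'')$ is Golod iff $K-v_i$ is Golod. This, together with the non-Golodness equivalence from (a) applied to $K$ and $K(J)$ themselves, yields both directions of (b).

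The main obstacle is the vertex-deletion identification in the case $j_i\ge2$: one must verify combinatorially that the surplus copies $v_{i,2},\ldots,v_{i,j_i}$ really become cone apices over $(K-v_i)(J'')$, rather than interacting in some subtler way with the remaining minimal non-faces. Once this combinatorial lemma is in place, (b) follows formally from (a) and the fact that coning preserves Golodness.
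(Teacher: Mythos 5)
Your proof is correct and follows essentially the same route as the paper's, but it is actually more careful at one point. The paper writes the vertex-deletion identity as
\[
K(J)-v = (K-i)(j_1,\ldots,j_{i-1},j_{i+1},\ldots,j_m),
\]
which as stated is only a literal equality of simplicial complexes when $j_i=1$. You correctly observe that when $j_i\ge 2$ the surplus duplicates $v_{i,2},\ldots,v_{i,j_i}$ survive as cone vertices, so the honest identity is
\[
K(J)-v_{i,1}=\Delta^{j_i-2}*(K-v_i)(J''),
\]
and you then explicitly invoke the (standard, and easily verified) fact that coning, i.e.\ joining with a simplex, does not affect the Tor-algebra and hence preserves Golodness, to conclude that $K(J)-v_{i,1}$ is Golod iff $(K-v_i)(J'')$ is, iff $K-v_i$ is by part (a). This is exactly the step the paper's terse formula glosses over; the remainder of your argument (part (a) via Theorem~\ref{BBCGcohring} together with Berglund--J\"ollenbeck, and the symmetry reduction to $k=1$) matches the paper's. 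In short, same approach, with the one missing combinatorial lemma about the case $j_i\ge 2$ supplied.
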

\begin{proof}
Statement (a) follows directly from Theorem~\ref{BBCGcohring}. 
We prove statement (b).
Suppose $K$ is minimally non-Golod. Then $K(J)$ is non-Golod by statement (a). Consider the complex obtained by removing a vertex $v$ from $K(J)$. 
Let $v=v_{ij_{l}}$, for $1\leq i\leq m, 1\leq l\leq i$ in the notation from the definition of the simplicial multiwedge. Then 
$$
K(J)-v=(K-i)(j_{1},\ldots,j_{i-1},j_{i+1},\ldots,j_{m})
$$ 
is a Golod complex by statement (a).
 
Therefore, we proved that $K(J)$ is minimally non-Golod if $K$ is minimally non-Golod. The opposite statement is proved in the same way.
\end{proof}

\begin{prop}\label{homotypeK(J)}
Let $K=\mathbb{C}P^{2}_{9}(J)$ for a sequence of positive integers $J$. Then $\zk$ is not homotopy equivalent to a wedge of spheres. \\
Moreover, for any simplicial complex $K$, if $\zk$ is not homotopy equivalent to a wedge of spheres, then so is $\mathcal Z_{K(J)}$.
\end{prop}
\begin{proof}
In the case $J=(1,\ldots,1)$ we know the homotopy type of $\zk$ (see Theorem~\ref{ComplexProjGolod}). The Steenrod square $Sq^2$ is nonzero in cohomology of any suspension over $\mathbb{C}P^2$ and all cohomology operations are trivial on wedges of spheres. 

In general, the isomorphism of ungraded cohomology rings from Theorem~\ref{BBCGcohring} with coeffiecients in $\mathbb{Z}/p$ ($p$ is prime) is an isomorphism of $\mathbb{Z}/p$-modules commuting with the action of the Steenrod algebra (see~\cite[Corollary 7.7]{BBCGit}), thus $\zk$ cannot be homotopy equivalent to a wedge of spheres.
\end{proof}

\begin{rema}
Note that, in general, $K(J)$ for a neighbourly complex $K$ is no longer a neighbourly simplicial complex. 
\end{rema}

Next we study minimally non-Golodness for $(n-1)$-dimensional spheres with few vertices. Note that any $(n-1)$-dimensional sphere with $m\leq n+3$ vertices is polytopal.

\begin{theo}\label{FewVertices}
Suppose $K=K_P$ is a nerve complex of a simple $n$-polytope with $m\leq n+3$ facets. The following statements hold:
\begin{itemize}
\item[(a)] If $m=n+1$ then $P$ is a simplex, $\mathcal Z_P$ is a sphere and $K_P$ is Golod.
\item[(b)] If $m=n+2$ then $P$ is combinatorially a product of two simplices, $\mathcal Z_P$ is a product of two odd-dimensional spheres and $K_P$ is minimally non-Golod.
\item[(c)] If $m=n+3$ then $K_P$ is minimally non-Golod if and only if $P$ is {\bf{not}} a product of 3 simplices.
\end{itemize}
\end{theo}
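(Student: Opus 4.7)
The strategy is case-by-case. Parts (a) and (b) follow directly from the standard classifications of simple polytopes with $n+1$ and $n+2$ facets, together with direct computation. For (a), $P=\Delta^n$ is the only possibility, so $K_P=\partial\Delta^n$ and $\zp\cong\partial((\D^2)^{n+1})\cong S^{2n+1}$; Golodness is immediate since $H^*(\zp)$ has a single positive-degree generator. For (b), the classification of simple polytopes with $n+2$ facets forces $P\cong\Delta^{k_1}\times\Delta^{k_2}$ with $k_1,k_2\geq 1$, so $K_P=\partial\Delta^{k_1}*\partial\Delta^{k_2}$ and $\zp\cong S^{2k_1+1}\times S^{2k_2+1}$, which has a nontrivial cup product and is thus non-Golod; deleting any vertex $v$, WLOG from the first factor, yields the cone $\Delta^{k_1-1}*\partial\Delta^{k_2}$, whose moment-angle complex is homotopy equivalent to $S^{2k_2+1}$ and hence Golod, so $K_P$ is minimally non-Golod.

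For part (c), the two directions are handled separately. For necessity, assume $P=\Delta^{k_1}\times\Delta^{k_2}\times\Delta^{k_3}$ with $k_i\geq 1$. Then $K_P=\partial\Delta^{k_1}*\partial\Delta^{k_2}*\partial\Delta^{k_3}$, and removing a vertex $v\in\partial\Delta^{k_1}$ produces $\Delta^{k_1-1}*\partial\Delta^{k_2}*\partial\Delta^{k_3}$, whose moment-angle complex deformation retracts onto $S^{2k_2+1}\times S^{2k_3+1}$. The nontrivial cup product of the latter shows $K_P-v$ is not Golod, so $K_P$ is not minimally non-Golod.

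For sufficiency, suppose $P$ is not combinatorially a product of three simplices. The classification of simple polytopes with $m-n=3$ via their two-dimensional Gale diagrams shows that $P$ is combinatorially equivalent to a simplicial multiwedge $P_5(J)$ of the pentagon $P_5$ for some $J=(j_1,\ldots,j_5)\in\Z_{>0}^5$. By Proposition~\ref{SMWGolod} it then suffices to verify minimal non-Golodness for $K_{P_5}$, which is the $5$-cycle on the vertex set $[5]$. Non-Golodness holds because $\mathcal Z_{K_{P_5}}$ is a closed oriented $7$-manifold realised as a nontrivial connected sum of sphere products, so it has nonzero cup product; explicitly there is a nonzero Tor-product $\widetilde H^0(K_I)\otimes\widetilde H^0(K_J)\to\widetilde H^1(K_{[5]})$ for $[5]=I\sqcup J$ with $I=\{1,3\}$ and $J=\{2,4,5\}$. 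Removing any vertex from $K_{P_5}$ leaves a $4$-vertex path, each of whose full subcomplexes is a disjoint union of subpaths and hence has vanishing $\widetilde H^k$ for $k\geq 1$; consequently all products $\widetilde H^0\otimes\widetilde H^0\to\widetilde H^1$ in its Tor-algebra vanish and the deleted complex is Golod.

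The principal obstacle is the Gale-diagram classification in subcase (c); with the reduction $P\cong P_5(J)$ in hand, the remaining work is a direct application of Proposition~\ref{SMWGolod} together with the elementary verification for the pentagon and its vertex deletions.
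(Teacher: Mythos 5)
Parts (a), (b), and the necessity direction of (c) are fine and match the paper's approach. The sufficiency direction of (c), however, rests on a classification claim that is false: it is not true that every simple $n$-polytope with $n+3$ facets which is not a product of three simplices is combinatorially a pentagon multiwedge $P_5(J)$. The Gale-diagram classification actually cited in the paper (Erokhovets~\cite[Theorem 2.3.48]{Er}) gives $P=C^{2k-4}(2k-1)^{*}(j_1,\dots,j_{2k-1})$ for some $k\geq 3$, where $C^{d}(m)$ denotes the $d$-dimensional cyclic polytope on $m$ vertices; your pentagon multiwedges cover only the case $k=3$. For a concrete counterexample to the claimed reduction, take $P=C^4(7)^*$, a simple $4$-polytope with $7$ facets. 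Its nerve $\partial C^4(7)$ is $2$-neighbourly, so every minimal non-face has size at least $3$, whereas any pentagon multiwedge $P_5(J)$ with $\sum_i j_i=7$ must have a minimal non-face of size $2$: the five non-face sizes are $j_i+j_{i+2}$ (indices taken cyclically), and requiring all of them to be $\geq 3$ gives $2\sum_i j_i\geq 15$, forcing $\sum_i j_i\geq 8$. Hence $C^4(7)^*$ is not a pentagon multiwedge, and the entire $k\geq 4$ families are left unaddressed.

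The paper handles all $k$ at once by noting that $C^{2k-4}(2k-1)$ is neighbourly and invoking~\cite[Proposition 3.6]{Li2}, which asserts that the nerve complex of an even-dimensional dual-neighbourly polytope is minimally non-Golod; Proposition~\ref{SMWGolod} then transfers the property through the multiwedge. To repair your argument you would need the analogous direct verification of minimal non-Golodness for $\partial C^{2k-4}(2k-1)$ for every $k\geq 3$, not only the pentagon.
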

\begin{proof}
The statements (a) and (b) are obvious since $n$-polytopes with $m=n+1$ or $m=n+2$ are determined uniquely up to affine or projective equivalence, respectively.\\
Let us prove the statement (c).\\
According to a result of Erokhovets~\cite[Theorem 2.3.48]{Er}, if $P$ is a simple $n$-polytope with $m=n+3$ facets, then $P=C^{2k-4}(2k-1)^{*}(j_1,\dots,j_{2k-1})$, where $k\geqslant 3$ and $C^{n}(m)$ denotes a $n$-dimensional cyclic polytope with $m$ vertices. 

A cyclic polytope is neighbourly (see~\cite{Zieg}). By~\cite[Proposition 3.6]{Li2} the nerve complex $K_P$ of an even dimensional dual neighbouly polytope $P$ is minimally non-Golod. The proof is finished applying Proposition~\ref{SMWGolod}.
\end{proof}

The topological types of the moment-angle manifolds $\zp$ corresponding to the statement (c) of Theorem~\ref{FewVertices} are described as follows.

\begin{prop}[\cite{LdM},\cite{Er}]
Let $P$ be a simple $n$-polytope with $m=n+3$ vertices, so that $P=C^{2k-4}(2k-1)^{*}(j_1,\dots,j_{2k-1})$, for some $k\geqslant 3$. Then
$$
\mathcal{Z}_P\cong\mathop{\rm \#}\limits_{i=1}^{2k-1}S^{2\varphi_i-1}\times
S^{2\psi_{i+k-1}-2},
$$
where $\varphi_r=j_{r}+\ldots+j_{r+k-2}$, $\psi_{r}=j_{r}+\ldots+j_{r+k-1}$,
and all the indices are considered modulo $2k-1$.
\end{prop}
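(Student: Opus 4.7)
The plan is to reduce to the base case $J=(1,\ldots,1)$ via Erokhovets' classification of simple $n$-polytopes with $n+3$ facets already invoked in the proof of Theorem~\ref{FewVertices}(c), which identifies $P$ with $C^{2k-4}(2k-1)^{*}(J)$ for some $k\ge 3$. This splits the task into two steps: (i) determining $\mathcal{Z}_{C^{2k-4}(2k-1)^{*}}$ in the base case $J=(1,\ldots,1)$, and (ii) tracking the effect of the polytopal $J$-construction on the resulting connected-sum decomposition.

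For step (i), I would realise $\mathcal{Z}_{C^{2k-4}(2k-1)^{*}}$ as a transverse intersection of real homogeneous quadrics in $\mathbb{R}^{2(2k-1)}$ in the sense of L\'opez de Medrano. The polytope $C^{2k-4}(2k-1)^{*}$ is a prototypical dual-neighbourly polytope, and its Gale diagram lies in convex position on $S^{1}$, so the classification in \cite{LdM} yields
\[
\mathcal{Z}_{C^{2k-4}(2k-1)^{*}}\cong \mathop{\rm \#}\limits_{i=1}^{2k-1}S^{2k-3}\times S^{2k-2},
\]
which agrees with the formula in the statement when $\varphi_{i}=k-1$ and $\psi_{i+k-1}=k$ for all $i$.

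For step (ii), I would argue by induction on $|J|=\sum_{i}j_{i}$, where a single inductive step increments one coordinate $j_{i_{0}}$ by~$1$. Geometrically this corresponds to a ``doubling'' of the $i_{0}$-th facet of $P(J)$ into a pair of parallel facets, and in the quadric model it is implemented by adjoining one further real quadric equation in a controlled way. A bookkeeping argument at the quadric level then shows that this doubling shifts up precisely those sphere factors whose $\varphi_{i}$ or $\psi_{i+k-1}$ sum involves $j_{i_{0}}$, while leaving the remaining factors unchanged; the resulting shifts are exactly what the recursion on $(\varphi_{i},\psi_{i+k-1})$ predicts when $j_{i_{0}}$ is incremented. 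Iterating recovers the formula in the statement.

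The hard part is step (ii). Theorem~\ref{BBCGcohring} only gives an isomorphism of ungraded cohomology rings under the $J$-construction, which is far weaker than a homeomorphism of connected sums of sphere products. The essential input is therefore the quadric-intersection model, which provides a genuinely geometric setting in which the incremental doubling can be analysed topologically rather than merely cohomologically; this is the role played by the analyses in \cite{LdM} and \cite{Er}.
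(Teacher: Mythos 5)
The paper does not prove this Proposition: it is quoted from the literature, with the combinatorial classification $P=C^{2k-4}(2k-1)^{*}(J)$ coming from~\cite{Er} and the identification of $\mathcal Z_P$ as a connected sum of sphere products coming from~\cite{LdM}. Your overall framework is right: $\mathcal Z_P$ for a polytope with $m=n+3$ facets is a transverse intersection of two real quadrics with the unit sphere in $\mathbb{C}^m$, the Gale diagram is a configuration of $2k-1$ points (with multiplicities $j_i$) in $\mathbb{R}^2$, and the topology is governed by López de Medrano's analysis. Your base case is also correct, and the arithmetic check you run ($\varphi_i=k-1$, $\psi_{i+k-1}=k$ when all $j_i=1$) does confirm that the general formula specializes correctly.

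Where you diverge from the source is step (ii). The argument in~\cite{LdM} does not proceed by induction on $\sum_i j_i$; it treats all multiplicities simultaneously, building the manifold by surgery/handle attachment along the faces of the moment polytope and reading off the connected-sum decomposition in one pass. Your inductive ``doubling'' step is closer in spirit to the later wedge/doubling theorem of Gitler and L\'opez de Medrano, and while the bookkeeping you sketch is in fact consistent (incrementing $j_{i_0}$ by one raises exactly one factor of each summand $S^{2\varphi_i-1}\times S^{2\psi_{i+k-1}-2}$ by two dimensions, with the $k-1$ summands where $j_{i_0}\in\varphi_i$ and the $k$ summands where $j_{i_0}\in\psi_{i+k-1}$ partitioning all $2k-1$ summands), the step itself is asserted rather than proved. ``Adjoining one further real quadric equation in a controlled way'' and ``a bookkeeping argument at the quadric level'' are the entire content of the doubling theorem, and as written this is a restatement of what needs to be shown rather than a proof of it. You are right that Theorem~\ref{BBCGcohring} alone cannot carry the induction (it gives an ungraded ring isomorphism, not a diffeomorphism), so the quadric model is indeed essential; but to make step (ii) rigorous you would need to either invoke the Gitler--L\'opez de Medrano doubling theorem explicitly as a black box, or follow~\cite{LdM} and do the surgery analysis directly for arbitrary $J$.
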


We therefore obtain infinite families of triangulated spheres $K$ which are minimally non-Golod and torsion free, whose corresponding $\zk$ are connected sums of products of two spheres.

\begin{rema}
Minimally non-Golodness in statement (c) of Theorem~\ref{FewVertices} can be also deduced from the explicit description of the multiplication in $H^{*,*}(\mathcal{Z}_P)$ in the case $m=n+3$, see~\cite[Theorem 2.5.8]{Er}
\end{rema}

We can also extend our results by considering the operation of composition of simplicial complexes, originally defined by Ayzenberg~\cite{Ay}.

\begin{defi}
Suppose $K$ is an $(n-1)$-dimensional simplicial complex on $m$ vertices, $K_1,\ldots,K_{m}$ are simplicial complexes (may be empty or with ghost vertices) on the sets $[l_1],\ldots,[l_m]$ respectively. Then the \textit{composition} of $K$ with $K_i, 1\leq i\leq m$ is the simplicial complex $K(K_1,\ldots,K_m)$ on the set $[l_1]\sqcup\ldots\sqcup [l_m]$ defined as follows: a set $I=I_1\sqcup\ldots\sqcup I_{m}$, with $I_{j}\subset [l_j]$ is a simplex of $K(K_1,\ldots,K_m)$ if and only if $\{j\in[m]| I_j\notin K_j\}\in K$.
The composition complex has therefore $l_{1}+\ldots+l_{m}$ vertices and dimension $(n-1)+(l_{1}+\ldots+l_{m}-m)$.
\end{defi}

\begin{exam}
Let $K_i=\partial\Delta^{j_{i}-1}$ for $1\leq i\leq m$. Then $K(K_{1},\ldots,K_{m})=K(J)$ is the simplicial multiwedge.
\end{exam}

\begin{theo}\label{SubstGolod}
Let $K_1,\ldots,K_m$ be simplicial complexes.
\begin{itemize}
\item[(a)] $K(K_1,\ldots,K_m)$ is Golod if and only if $K_{[m]-\{s_{1},\ldots,s_{r}\}}$ is Golod, where $1\leq s_{1},\ldots,s_{r}\leq m$ are such that $K_{s_{i}}=\Delta^{l_{s_i}-1}$ for all $1\leq i\leq r$ ($l_{i}\geq 1$).
\item[(b)] $K(K_1,\ldots,K_m)$ is minimally non-Golod if and only if $K_{s_{i}}=\Delta^{l_{s_i}-1}$ for $1\leq i\leq r$ and $K_{j}=\partial\Delta^{l_{j}-1}$ for $j\neq s_{i}, 1\leq i\leq r$ and $K_{[m]-\{s_{1},\ldots,s_{r}\}}$ is minimally non-Golod ($l_{i}\geq 1$).
\end{itemize}
\end{theo}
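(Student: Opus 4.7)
My plan is to reduce Theorem~\ref{SubstGolod} to Proposition~\ref{SMWGolod} via two stages: first eliminate simplex factors through joins, then analyze the Golodness of the residual composition via full subcomplexes.

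Stage 1 (simplex factors). If $K_s=\Delta^{l_s-1}$, then $\sigma_s\in K_s$ automatically, so the index $s$ never lies in the defect set $\{j:\sigma_j\notin K_j\}$, and a direct check yields
$$K(K_1,\ldots,K_m)=\Delta^{l_s-1}*K_{[m]-\{s\}}\bigl(K_1,\ldots,\widehat{K_s},\ldots,K_m\bigr).$$
Taking the join with a simplex multiplies $\mathcal Z$ by a contractible disk, preserving the cohomology ring and hence both the Golod and minimally non-Golod properties. Iterating, I may assume no $K_j$ is a full simplex, so $r=0$.

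Stage 2 (part (a)). In this reduced setting the claim is ``$K(K_1,\ldots,K_m)$ is Golod iff $K$ is Golod,'' and my main tool is the combinatorial identity
$$K(K_1,\ldots,K_m)_I=K_J\bigl((K_j)_{I_j}:j\in J\bigr),\qquad J=\{j:I_j\neq\varnothing\},$$
for full subcomplexes, obtained by unwinding the defect condition. A first observation: whenever $J$ is a simplex of $K$, the restricted $K_J$ is a full simplex, so $K_J((K_j)_{I_j})$ is itself a full simplex and thus contractible; hence only the non-simplices $J\notin K$ contribute to the reduced cohomology. To show ``$K$ Golod $\Rightarrow$ composition Golod,'' I would analyze the product map of Theorem~\ref{zkcoh} on disjoint pairs $I,I'$: splitting by whether $J\cap J'=\varnothing$ or not, the map either factors through a product of full subcomplexes of $K$ (which vanishes by Golodness) or the target involves a subcomplex whose $K_j$-factors for $j\in J\cap J'$ are contractible, forcing the join inclusion to induce zero on cohomology. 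The same bookkeeping rules out higher Massey products. For the converse, given a non-Golod witness $(A,B)$ for $K$ with disjoint $A,B\subseteq[m]$ and nontrivial $\tilde H^*(K_A)\otimes\tilde H^*(K_B)\to\tilde H^*(K_{A\cup B})$, I take $I=\bigsqcup_{j\in A}[l_j]$ and $I'=\bigsqcup_{j\in B}[l_j]$; the identity above recovers $K_A(K_j:j\in A)$ and $K_B(K_j:j\in B)$, and the inclusion into the join transports the nontrivial $K$-product to the composition.

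Stage 3 (part (b)). Sufficiency is immediate from Proposition~\ref{SMWGolod}: under the stated hypotheses $K(K_1,\ldots,K_m)$ is the join of simplices with the simplicial multiwedge $K_{[m]-\{s_i\}}(l_j:j\notin\{s_i\})$, which is minimally non-Golod iff $K_{[m]-\{s_i\}}$ is. For necessity I use a vertex-removal argument: if some non-simplex $K_j\neq\partial\Delta^{l_j-1}$, then $K_j$ has a minimal non-face $\tau\subsetneq[l_j]$, so for any $v\in[l_j]\setminus\tau$ the complex $K_j-v$ is still not a simplex. Removing $v$ from the composition yields $K(K_1,\ldots,K_j-v,\ldots,K_m)$, whose simplex-index set equals that of the original composition, so by part~(a) it has the same Golod status and therefore remains non-Golod---contradicting minimal non-Golodness. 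Hence every non-simplex $K_j=\partial\Delta^{l_j-1}$, and Proposition~\ref{SMWGolod} identifies minimal non-Golodness of the composition with that of $K_{[m]-\{s_i\}}$.

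The main obstacle is Stage~2: verifying that the nontrivial cup product of $K$ lifts faithfully to the composition without interference from the (possibly non-Golod) factors $K_j$, and symmetrically that Golodness of $K$ really does force triviality of all products and Massey operations in the composition. I expect this to work because the contractibility of the outer disks in the polyhedral product realization $\mathcal Z_{K(K_1,\ldots,K_m)}=(\underline{D^{2l_j}},\underline{\mathcal Z_{K_j}})^K$ absorbs multiplicative contributions from the internal structure of $\mathcal Z_{K_j}$, so that Golodness of the composition is governed entirely by the combinatorics of $K$; however, the careful case-by-case analysis of the join inclusion when the supporting index sets $J$ and $J'$ overlap is the most delicate part of the argument.
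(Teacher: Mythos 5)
Your Stage~1 reduction (stripping off simplex factors as joins) and Stage~3 (necessity in part~(b) by deleting a vertex outside a minimal non-face of a non-simplex $K_j$, then invoking part~(a)) are both sound and cleanly argued. However, Stage~2, which is the heart of the matter, has substantial gaps that do not survive scrutiny.

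The first problem is the case $J\cap J'\neq\varnothing$. You assert that ``the target involves a subcomplex whose $K_j$-factors for $j\in J\cap J'$ are contractible, forcing the join inclusion to induce zero on cohomology.'' This is false in general: take $K_j=\partial\Delta^{l_j-1}$ and $I_j\cup I'_j=[l_j]$, so that $(K_j)_{I_j\cup I'_j}=\partial\Delta^{l_j-1}$ is a sphere, not contractible. Since the multiwedge (all $K_j=\partial\Delta^{l_j-1}$) is precisely the motivating special case of the theorem, any argument that breaks there cannot be the right mechanism. The second problem is the case $J\cap J'=\varnothing$: you claim the product ``factors through a product of full subcomplexes of $K$ (which vanishes by Golodness),'' but the relevant cohomology groups are those of $K_J((K_j)_{I_j})$ and $K_{J\cup J'}(\cdots)$, not of $K_J$ and $K_{J\cup J'}$ themselves, and there is no evident simplicial map making $\widetilde H^*(K_J((K_j)_{I_j}))$ a quotient or a subspace of $\widetilde H^*(K_J)$. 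In fact these groups can differ drastically (e.g.\ when some $K_j$ is contractible but not a simplex). The same difficulty infects the converse: to lift a non-Golod witness from $K$ to the composition you must show the relevant classes in $\widetilde H^*(K_A)$ give rise to nonzero classes in $\widetilde H^*(K_A(K_j:j\in A))$ whose product stays nonzero, and this is precisely the nontrivial content of Theorem~\ref{BBCGcohring} in the multiwedge case; it is not available for general $K_j$.

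The paper avoids all of this. It first treats the case where only one $K_i=L$ is non-void, via the explicit join/pushout description
\begin{equation*}
K(L)=L*(K-i)\cup_{L*\link_iK}\Delta^{l_i-1}*\link_iK,
\end{equation*}
and reduces to the multiwedge case (where Theorem~\ref{BBCGcohring} applies) by restricting to a minimal non-face $V\subset L$, so that $K(L_V)$ is a full subcomplex of $K(L)$ and $L_V=\partial\Delta^{|V|-1}$; Golodness passes to full subcomplexes, and Proposition~\ref{SMWGolod} finishes the comparison with $K$. The general case then follows by induction on the number of non-void $K_j$ using Ayzenberg's composition identity (\cite[Corollary 4.14]{Ay}), $K(K_1,\ldots,K_m)=K(K_i)(K_1,\ldots,K_{i-1},\varnothing,\ldots,\varnothing,K_{i+1},\ldots,K_m)$. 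Your proposal uses neither of these two key ingredients, and without a substitute for the reduction to the multiwedge case the Stage~2 claims remain unsupported. To salvage your approach you would need, at minimum, a lemma relating the cohomology and cup products of $K_J((K_j)_{I_j})$ to those of some multiwedge of $K_J$, which is essentially what the paper's restriction-to-$K(L_V)$ step accomplishes.
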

\begin{proof}
We proceed by induction on the number $N$ of non-empty complexes in $K_{1},\ldots,K_{m}$. The base case $N=0$ is trivial, so consider $N=1$. Suppose that $K_i=L$ is the only non-empty complex. Then
\begin{equation}\label{KL}
K(L)=K(\varnothing_{1},\ldots,K_i,\ldots,\varnothing_{m})=L*(K-i)\cup\Delta^{l_{i}-1}*\link_{i}K,
\end{equation}
where the simplicial complexes in the union are glued along their common subcomplex $L*\link_{i}K$.\\
Let us prove (a).\\
For the ``only if'' part, if $L$ is a simplex then $K(L)=\Delta^{l_{i}-1}*(K-i)$ and the statement is true as $\mathcal Z_{K(L)}\simeq\mathcal Z_{K-i}$. Otherwise, take a minimal non-face $V$ in $L$. Then $K(V)$ is a full subcomplex in a Golod complex $K(L)$ and thus $K(V)$ is Golod, therefore $K$ is Golod by Proposition~\ref{SMWGolod}.

For the ``if'' part, if $L$ is a simplex then $K(L)=\Delta^{l_{i}-1}*(K-i)$ and the statement is true. Otherwise, suppose $K(L)$ is non-Golod and the following map is nontrivial:
$$
\widetilde{H}^{i}(K(L)_I)\otimes\widetilde{H}^{j}(K(L)_J)\to\widetilde{H}^{i+j+1}(K(L)_{I\sqcup J}).
$$
Then it is also nontrivial viewed as a cup-product in the Tor-algebra of $K(\partial\Delta^{(I\sqcup J)\cap L})$, but the latter complex is Golod by Proposition~\ref{SMWGolod}.

Let us prove (b).\\
The ``if'' part follows obviously from Proposition~\ref{SMWGolod}. Now we prove the ``only if'' part. Suppose that $K(L)$ is minimally non-Golod. If $L$ is a simplex, then $\mathcal Z_{K(L)}\simeq\mathcal Z_{K-i}$; if $L$ is the boundary of a simplex then $K$ is minimally non-Golod by Proposition~\ref{SMWGolod} and the statement is true. Assume that $L$ is neither a simplex nor the boundary of a simplex. Then there is a proper subset of vertices $V\subset L$ (a minimal non-face of $L$) such that $L_V$ is the boundary of a simplex. Note that $K(L_V)$ is a full subcomplex in $K(L)$, so that $K(L_V)$ is Golod, as $K(L)$ is minimally non-Golod. Then $K$ is Golod by Proposition~\ref{SMWGolod} and we get a contradiction with part (a).

To make an induction step in both (a) and (b) we use the following result~\cite[Corollary 4.14]{Ay}:
$$
K(K_1,\ldots,K_m)=K(L)(K_1,\ldots,K_{i-1},\varnothing,\ldots,\varnothing,K_{i+1},\ldots,K_m),
$$
where there are exactly $l_i$ empty simplicial complexes in the second substitution. This finishes the proof by induction on the number of non-empty complexes in the composition of simplicial complexes.
\end{proof}

We finish the article by introducing a characterization of minimally non-Golod complexes $K$ in terms of cohomology length $cup(\zk)$ of their moment-angle complexes $\zk$. In fact, we show that for a minimally non-Golod complex $K$ the combinatorial invariant $cup(\zk)$ is equal to either 1, or 2, and both possibilities may occur. Moreover, we introduce a 4-dimensional minimally non-Golod simplicial complex $\mathcal K$ on 9 vertices such that $H^*(\mathcal Z_{\mathcal K})$ contains a non-trivial triple Massey product. This complex $\mathcal K$ is defined below.

\begin{constr}
Let us determine a simplicial complex $\mathcal K$ from the set of its minimal non-faces. That is, we have the following description of its Stanley--Reisner ring (over a field $\ko$):
$$
\ko[\mathcal K]=\ko[v_{1},\ldots,v_{9}]/I_{\mathcal K},
$$
where $I_{\mathcal K}$ is generated by the 15 monomials: 
$$
v_{1}v_{2}v_{3},v_{4}v_{5}v_{6},v_{7}v_{8}v_{9},v_{1}v_{2}v_{4}v_{5},v_{5}v_{6}v_{7}v_{8},
$$
$$
v_{2}v_{3}v_{7}v_{8},v_{1}v_{4}v_{7},v_{2}v_{3}v_{5}v_{6}v_{7},v_{1}v_{2}v_{4}v_{6}v_{8}v_{9},v_{1}v_{3}v_{4}v_{5}v_{8}v_{9},
$$
$$
v_{1}v_{3}v_{5}v_{6}v_{7}v_{9},v_{2}v_{3}v_{4}v_{5}v_{7}v_{9},v_{2}v_{3}v_{4}v_{5}v_{8}v_{9},v_{2}v_{3}v_{4}v_{6}v_{7}v_{9},v_{2}v_{3}v_{5}v_{6}v_{8}v_{9}.
$$
It is easy to see that $\dim\mathcal K=4$ and $m=9$, and therefore, $\dim\mathcal Z_{\mathcal K}=14$.
\end{constr}

\begin{theo}
The next statements hold.
\begin{itemize}
\item[(a)] If $K$ is a minimally non-Golod complex, then $cup(\zk)\leq 2$; 
\item[(b)] $\mathcal K$ is a minimally non-Golod complex such that 
\begin{itemize}
\item $cup(\mathcal Z_{\mathcal K})=1$;
\item There is a non-trivial triple Massey product $\langle [e_{1}],[e_{2}],[e_{3}]\rangle\in H^{14}(\mathcal Z_{\mathcal K})$, where the basis elements in the Taylor complex of $\mathcal K$ are indexed according to the above order in the minimal set of generators of $I_{\mathcal K}$. 
\end{itemize}
\end{itemize}
\end{theo}
\begin{proof}
To prove statement (a), suppose, on the contrary, that there exists a minimally non-Golod complex $K$ such that there is a non-zero product of 3 elements in $H^*(\zk;\ko)$, where $\ko$ is a certain field. By Theorem~\ref{zkcoh}, it follows that there is a non-zero product of three cohomology classes represented by certain simplicial cocycles $a_{1},a_{2}$, and $a_{3}$ on the vertex subsets $I_{1},I_{2}$, and $I_{3}$ of $[m]$. In particular, these subsets are pairwisely disjoint and the corresponding 3 cohomology classes have pairwisely non-trivial cup products. However, since $K$ is minimally non-Golod over $\ko$, the cup product of $[a_{1}]$ and $[a_{2}]$ should be zero as a product of two positive-degree elements in $H^*(\mathcal K_{[m]\backslash v})$, for $v\in I_{3}$ and $K_{[m]\backslash v}$ being a Golod complex (over $\ko$). We got a contradiction which finishes the proof of the first statement.

To prove statement (b), note that $\mathcal Z_{\mathcal K}$ has cohomology length 1 and its cohomology contains the above non-trivial triple Massey product of elements of degree 5 due to~\cite[Theorem 3.1, Remark 3.3]{Kat}. From the latter it follows immediately that $\mathcal K$ is not Golod. However, for each vertex $v\in\mathcal K$ the full subcomplex $K_{[m]\backslash v}$ has 8 vertices and the cup product in $H^*(\mathcal K_{[m]\backslash v})$ is trivial due to Theorem~\ref{zkcoh}. By~\cite[Theorem 6.3(5)]{Kat}, $K_{[m]\backslash v}$ is a Golod complex and therefore we proved that $\mathcal K$ is minimally non-Golod. This finishes the proof of the theorem.  
\end{proof}


\begin{thebibliography}{99}

\bibitem{Ay} Anton\,Ayzenberg, \emph{Composition of simplicial complexes, polytopes and multigraded Betti numbers}, Preprint, (2013), arXiv:1301.4459v1.

%\bibitem{B}
%Victor M. Buchstaber. \emph{Lectures on toric
%topology}. In \emph{Proceedings of Toric Topology Workshop KAIST
%2008}. Trends in Math.~{\bf10}, no.~1. Information Center for
%Mathematical Sciences, KAIST, 2008, pp.~1--64.

\bibitem{BBCGit} A.\,Bahri, M.\,Bendersky, F.\,R.\,Cohen,
S.\,Gitler. \textit{Operations on polyhedral products and a new topological construction of infinite
families of toric manifolds}, Preprint, (2010),
arXiv:1011.0094v5.

\bibitem{BBCG} A.\,Bahri, M.\,Bendersky, F.\,R.\,Cohen, S.\,Gitler. \textit{The
polyhedral product functor: A method of decomposition for
moment-angle complexes, arrangements and related spaces}, Advances
in Mathematics, 225:3 (2010), 1634--1668.

\bibitem{B-G} Piotr Beben and Jelena Grbi\'c, \textit{Configuration spaces and polyhedral products}, Preprint, (2015), arXiv:1409.4462v11.

\bibitem{B-J} Alexander Berglund and Michael J\"ollenbeck. \emph{On the Golod property of Stanley--Reisner rings}, J. Algebra \textbf{315}:1 (2007), 249--273.

%\bibitem{B-M}
%Fr\'ed\'eric Bosio and Laurent Meersseman. \emph{Real quadrics in
%$\mathbb C^n$, complex manifolds and convex polytopes.} Acta
%Math.~\textbf{197} (2006), no.~1, 53--127.

\bibitem{TT} Victor M. Buchstaber and Taras E. Panov. \emph{Toric Topology}, Mathematical Surveys and Monographs, 204, American Mathematical Society, Providence, RI, 2015.

\bibitem{Er} Nickolay\,Yu.\,Erokhovets, \textit{Buchstaber invariant theory of simplicial complexes and convex polytopes}, Proc. of the Steklov Inst. of Math., (2014), Volume 286, Issue 1, 128--187.

%\bibitem{G-LdM} Samuel Gitler and Santiago Lopez de Medrano. %\emph{Intersections of quadrics, moment-angle manifolds and connected sums}, %Geom. Topol. 17 (2013), No. 3, 1497--1534.

\bibitem{G} Evgeniy S. Golod. \emph{On the cohomology of some local rings} (Russian); Soviet Math. Dokl. \textbf{3} (1962), 745--749.

\bibitem{G-P-T-W} Jelena Grbic, Taras Panov, Stephen Theriault and Jie Wu. \emph{Homotopy types of moment-angle complexes for flag complexes}, Transactions of the AMS, (2015), to appear; arXiv:1211.0873.

%\bibitem{G-T} Jelena Grbic, Stephen Theriault. \textit{The homotopy type of the %complement of a coordinate subspace arrangement.} Topology {\bf 46} (2007, no. %4, 357--396)

\bibitem{G-L} T.\,H.\,Gulliksen and G.\,Levin. \emph{Homology of local rings}, Queen's Papers in Pure and Applied Mathematics, V. 20, Queen's University, Kingston, Ontario, 1969.

\bibitem{Hoch}
M.\,Hochster, \textit{Cohen-Macaulay rings, combinatorics, and
simplicial complexes}, in Ring theory, II (Proc. Second
Conf.,Univ. Oklahoma, Norman, Okla., 1975),  Lecture Notes in Pure
and Appl. Math., V. 26, 171--223, Dekker, New York, 1977.

\bibitem{I-K} Kouyemon Iriye and Daisuke Kishimoto. \emph{Fat wedge filtrations and decomposition of polyhedral products}, Preprint (2014); arXiv:1412.4866v3.

\bibitem{Kat} 
L. Katth\"an, \emph{A  non-Golod  ring  with  a  trivial  product  on  its  Koszul  homology},  J.  Algebra {\bf{479}} (2017),  244–-262.

\bibitem{K-B} W.\,K\"{u}hnel, T.\,F.\,Banchoff. \emph{The 9-Vertex Complex Projective Plane}. Math. Int., {\bf 5} (1983), no. 3, 11--22.

\bibitem{K-L} W.\,K\"{u}hnel, G.\,Lassmann. \emph{The Unique 3-Neighbourly 4-Manifold with Few Vertices}. J. of Comb. Theory, {Series A} {\bf 35} (1983),
173--184.

\bibitem{Li2} Ivan Yu. Limonchenko. \emph{Stanley--Reisner rings of generalized truncation polytopes and their moment-angle manifolds}, Proc. of the Steklov Inst. of Math. {\bf{286}}, (2014), 188--197. 

\bibitem{Mac}
\emph{Macaulay 2}. A software system devoted to supporting
research in algebraic geometry and commutative algebra. Available
at {\tt http://www.math.uiuc.edu/Macaulay2/}

%\bibitem{McG}
%D.McGavran. \emph{Adjacent connected sums and torus actions}, Trans. AMS 251 %(1979), 235--254.

\bibitem{LdM} S.\,Lopez de Medrano. \emph{Topology of the intersection of quadrics in $\mathbb{R}^n$}, Lecture Notes in Mathematics 1370 (1989), 280--292. 

\bibitem{P}
Taras E. Panov. \emph{Cohomology of face rings, and torus actions},
in ``Surveys in Contemporary Mathematics''. London Math. Soc.
Lecture Note Series, vol.~\textbf{347}, Cambridge, U.K., 2008, 165--201; arXiv:math.AT/0506526.

%\bibitem{Pa}
%Taras Panov. \emph{Moment--angle manifolds and complexes}.
%In \emph{Proceedings of Toric Topology Workshop KAIST
%2010}. Trends in Math.~{\bf12}, no.~1. Information Center for
%Mathematical Sciences, KAIST, 2010, pp.~43--69.

%\bibitem{S}
%Richard P. Stanley. \emph{Combinatorics and Commutative Algebra},
%second edition. Progr. in Math.~{\bf 41}. Birkh\"auser, Boston,
%1996.

\bibitem{Zieg} G\"{u}nter\,M.\,Ziegler. \emph{Lectures on
Polytopes}. Springer-Verlag, New York, 2007.


\end{thebibliography}
\end{document}